\documentclass[11pt,reqno]{amsart}

\usepackage[a4paper,margin=3cm]{geometry}
\usepackage[dvipsnames]{xcolor}
\usepackage[english]{babel}
\usepackage[babel]{microtype}

\usepackage{amsmath}
\usepackage{amssymb}
\usepackage{amsthm}
\usepackage[bb=boondox]{mathalpha}

\theoremstyle{plain}
\newtheorem{theorem}{Theorem}[section]
\newtheorem{proposition}[theorem]{Proposition}
\newtheorem*{proposition*}{Proposition}
\newtheorem*{theorem*}{Theorem}
\newtheorem{lemma}[theorem]{Lemma}
\newtheorem*{lemma*}{Lemma}
\newtheorem{corollary}[theorem]{Corollary}
\newtheorem{openproblem}[theorem]{Open Problem}

\theoremstyle{definition}

\newtheorem*{remark*}{Remark}

\usepackage{mathrsfs}
\usepackage{stmaryrd}
\usepackage{tikz}
\usetikzlibrary{arrows.meta,decorations.pathreplacing}
\colorlet{cladecol}{MidnightBlue}
\colorlet{hicol}{BrickRed}
\colorlet{subsetcol}{ForestGreen!70!black}
\usepackage{enumerate}
\usepackage{mathtools}
\usepackage[full]{textcomp}
\mathtoolsset{showonlyrefs}

\numberwithin{equation}{section}

\usepackage[T1]{fontenc}
\usepackage{libertinus}
\usepackage[utf8]{inputenc}

\usepackage[symbol]{footmisc}
\usepackage{comment}
\usepackage{appendix}
\usepackage{graphicx}
\usepackage{float}
\usepackage{needspace}

\usepackage[
  breaklinks=true,
  pagebackref=true,
]{hyperref}
\hypersetup{
  colorlinks=true,
  pdfpagemode=UseNone,
  citecolor=ForestGreen,
  linkcolor=MidnightBlue,
  urlcolor=RoyalBlue,
  pdfstartview=FitH,
  pdftitle={The Height of Discrete-Time Critical Beta-Splitting Trees},
  pdfauthor={Heng Ma}
}

\usepackage{etoolbox}
\patchcmd{\paragraph}
  {\normalfont}
  {{\normalfont\fontseries{sb}\selectfont}}
  {}
  {\PackageError{paragraph-style}
    {Failed to patch paragraph}
    {Check your document class and heading packages.}}
\makeatletter
\patchcmd{\paragraph}
  {\z@\z@}
  {\z@{2pt}}
  {}
  {\PackageError{paragraph-style}
    {Failed to patch paragraph spacing}
    {Check your document class and heading packages.}}
\makeatother

\def\N{\mathbb{N}}

\def\E{\mathbf{E}}
\renewcommand{\P}{\mathbf P}

\newcommand{\ind}[1]{\mathbf{1}_{\{#1\}}}

\newcommand*{\dif}{\ensuremath{\mathop{}\!\mathrm{d}}}

\newcommand{\I}{\mathcal I}
\newcommand{\DTCS}{\operatorname{DTCS}}
\newcommand{\CTCS}{\operatorname{CTCS}}
\newcommand{\Exp}{\operatorname{Exp}}

\definecolor{RoadmapRevisionColor}{RGB}{175,40,40}

\title[Height of critical beta-splitting trees]
{The Height of Discrete-Time Critical Beta-Splitting Trees}
\author{Heng Ma}
\address[Heng Ma]
{Faculty of Data and Decision Sciences, Technion - Israel's Institute of
Technology, Haifa, 32000, Israel.}
\email{hengmamath(at)gmail(dot)com}
\urladdr{\url{https://hengmamath.github.io}}
\date{\today}

\begin{document}

\begin{abstract}
We determine the  asymptotic height of the discrete-time
critical beta-splitting tree. Let $L_n^*$ denotes the height of the tree with $n$ leaves, defined as   the maximum graph distance from the root to a leaf. Then,  
\[
  \frac{L_n^*}{(\log n)^2}
  \longrightarrow
  C_{\mathrm{ht}}:=\min_{\theta>1}
  \frac{\theta}{2\{\psi(\theta)+\gamma\}} \approx  0.976 
\]
almost surely and in $L^{p}$ for every fixed $p>1$ as $n \to \infty$. Here
  $\psi$ is the digamma function and $\gamma$ is Euler's constant. This answers \cite[Open Problem~4]{AldousJansonII} of Aldous and Janson.
\end{abstract}

\maketitle

\section{Introduction}
\label{sec:introduction}

The critical beta-splitting tree belongs to the family of cladogram models
introduced by Aldous~\cite{AldousCladograms}. We determine the first-order
constant for its maximum root-to-leaf graph distance.

Write
$[n]=\{1,\ldots,n\}$ and $h_m=\sum_{j=1}^m j^{-1}$, with $h_0=0$.
Starting from the initial clade $[n]$,  
each clade of size $m\geq2$ splits into
an ordered pair of children, with left-child size distribution
\begin{equation}\label{eq:splitting-rule}
  q(m,i)=\frac{1}{2h_{m-1}} \frac{m}{i(m-i)}
  =\frac{1}{2h_{m-1}}
   \Bigl(  \frac1i+\frac1{m-i}\Bigr)  ,
  \qquad 1\leq i<m.
\end{equation}
Given size $i$, the left child receives a uniform $i$-element subset of the
parent's labels and the right child receives the complement. The children
evolve independently by the same rule; singletons become leaves.
Denote this tree by $\DTCS(n)$; see Figure~\ref{fig:dtcs-definition}.
Let $L_n(r)$
denote the height of leaf $\{r\}$, defined as its graph distance from the root $[n]$.  The height of $\DTCS(n)$ is given by
\begin{equation}\label{eq:Ln-star-definition}
  L_n^*:=\max_{1\leq r\leq n}L_n(r).
\end{equation} 
By the consistency property of Aldous and Janson
\cite[Theorem~2.3 and Remark~2.7]{AldousJansonIII},
the trees $(\DTCS(n))_{n\geq1}$ can be coupled so that deleting
leaf $n+1$ from $\DTCS(n+1)$ and suppressing its parent
recovers $\DTCS(n)$; if the parent is the root, its remaining
child becomes the new root.
Throughout, we use this coupling, whose fragmentation
realization is described in Section~\ref{sec:fragmentation}.

Aldous and Pittel~\cite[Theorem~1.5]{AldousPittel} proved a high-probability
upper bound  $L_{n}^{*} \le C (\log n)^2$ with $C \approx 42.9$.  
Aldous and Janson~\cite[Open Problem~4]{AldousJansonII}
asked whether $L_n^*/(\log n)^2$ converges in probability and for the
value of its limit. We answer this question as follows.

Let $\psi=\Gamma'/\Gamma$ be the digamma function and
$\gamma :=\lim\limits_{m\to\infty}(h_m-\log m)$ be Euler's constant. Set
\begin{equation}\label{eq:kappa-definition}
  \kappa(\theta):=\psi(\theta)+\gamma ,
  \qquad \theta>1.
\end{equation}

\begin{theorem}\label{thm:main} 
For every fixed $1\leq p<\infty$, as $n \to \infty$,
\begin{equation}\label{eq:main-limit}
  \frac{L_n^*}{(\log n)^2}\xrightarrow[n \to \infty]{\mathrm{a.s.},\, L^p }C_{\mathrm{ht}} :=  \min_{\theta>1}\frac{\theta}{2\kappa(\theta)} = \frac{1}{2 \psi'(\theta_*)}. 
\end{equation} 
Here $\theta_*$ is the unique solution of the equation $\theta \psi'(\theta) = \kappa(\theta)$ on $(1,\infty)$.
\end{theorem}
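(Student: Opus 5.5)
The plan is to reduce the height to a branching random walk in logarithmic scale and apply a first/second moment argument of Biggins type, with a twist coming from the $(\log n)^2$ scaling. Follow a single lineage: a clade of size $m$ splits and the tagged leaf goes to a child of size $i$ with probability $\frac{i}{m}q(m,i)=\frac{1}{h_{m-1}}\frac{1}{m-i}$. Thus $\log m$ decreases by a roughly $O(1)$ amount per step. The decrement $\log(m/i)$ has asymptotic law with density proportional to $\frac{e^{-x}}{1-e^{-x}}$ (in $u=i/m$, density $\propto 1/(1-u)$), normalized by $h_{m-1}\approx\log m$. So each step removes only $O(1/\log m)$ expected log-size, which is why depth scales like $(\log n)^2$. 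The natural generating quantity is, for $\theta>1$,
\[
  \E\Bigl[\sum_{\text{children } c}(|c|/m)^{\theta}\Bigr]\cdot(\text{normalization})
  \;\approx\; \frac{1}{2h_{m-1}}\sum_{i}\Bigl(\tfrac1i+\tfrac1{m-i}\Bigr)\Bigl(\tfrac im\Bigr)^{\theta}\cdot 2 .
\]
Using $\sum_{i<m}\frac{1}{m-i}(i/m)^{\theta}\approx h_{m-1}-\kappa(\theta)$ (from $\int_0^1\frac{1-u^{\theta}}{1-u}\dif u=\psi(\theta+1)+\gamma$, adjusted by the $1/i$ part to give $\kappa(\theta)$), the expected value is $1-\kappa(\theta)/h_{m-1}+O(m^{-1})$ for $\theta>1$.

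\textbf{Step 1: a supermartingale for the upper bound.} Define $W_k=\sum_{\text{clades } v \text{ at depth } k}|v|^{\theta}\,\prod(\text{correction})$. More directly, let $F_\theta(m)$ denote the expected number of leaves at depth $\ge \ell$. Using the computation above, the expected number of depth-$k$ clades weighted by $(|v|/n)^\theta$ is bounded by $\prod_{j<k}(1-\kappa(\theta)/\log m_j)$. The number of clades of size $\ge 2$ at depth $k$ is then at most $\sum_v (|v|/1)^\theta\cdot 2^{-\theta}$ in Markov fashion. While log-size is in the window $[t,t+dt]$ of $\log n$, each step costs $\kappa/t$ in log of the expectation. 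Each step consumes only about $1/t$ of log-size, while survival needs total "budget" $\theta\log n$. Choose the depth spent at log-size level $t$ as a density $a(t)\,\dif t$ steps. The weighted count decays like $\exp(-\int \kappa(\theta) a(t)/t\,\dif t)$ against the gain $n^\theta$.

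Optimizing, or more simply taking $k$ steps uniformly, gives that depth $k=c(\log n)^2$ has expected count $\le \exp(\theta\log n - \kappa(\theta)\cdot 2c\log n\cdot(\ldots))$. The exponent becomes negative when $c>\theta/(2\kappa(\theta))$. The factor $2$ comes from $\sum_{t\le \log n} (\text{steps at level }t)/t$ with steps $\propto t$, i.e.\ $\int_0^{L}\frac{2c t}{t}\dif t$-type bookkeeping. I would make this precise by a level-dependent $\theta$-tilt: use the process $M_k=\sum_v |v|^{\theta}\exp(\sum_{\text{ancestors}}\kappa(\theta)/h_{|u|-1})$, which is a supermartingale up to $O(1/m)$ errors. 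A first moment bound then gives $\P(L_n^*\ge (C_{\mathrm{ht}}+\epsilon)(\log n)^2)\le n^{-\delta}$. Minimizing over $\theta$ gives $C_{\mathrm{ht}}$, and the identity with $1/(2\psi'(\theta_*))$ is calculus: the derivative of $\theta/\kappa(\theta)$ vanishes iff $\kappa=\theta\psi'$.

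\textbf{Step 2: lower bound.} Let $\theta_*$ be the minimizer and tilt the lineage by $(i/m)^{\theta_*}$. Under the tilt, the per-step log-size decrement has mean $\psi'(\theta_*)/h_{m-1}$ to first order, since differentiating $\kappa$ in $\theta$ gives this. So a tilted lineage reaches size $1$ after about $\int_0^{\log n} t/\psi'(\theta_*)\,\dif t=(\log n)^2/(2\psi'(\theta_*))$ steps. A truncated second moment on the number of leaves whose lineage stays close to this tilted trajectory shows that, with probability bounded below, some leaf has depth $\ge (C_{\mathrm{ht}}-\epsilon)(\log n)^2$. The second moment is taken over pairs splitting at a common ancestor, with standard barrier truncation on the profile $\log|v_k|\approx\log n\sqrt{1-k/K}$.

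\textbf{Step 3: boosting.} Boost the probability to $1-o(1)$, indeed summably, by noting that the root's descendants at depth $o((\log n)^2)$ contain many independent clades of size $n^{1-o(1)}$. Each of these independently succeeds, using the fragmentation realization of Section~\ref{sec:fragmentation}.

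\textbf{Step 4: a.s.\ and $L^p$ convergence.} For a.s.\ convergence, use the consistency coupling: $L_n^*$ is nondecreasing in $n$, so summable bounds along $n_k=\lfloor e^{k^{1/2}}\rfloor$, or even $2^k$ since $(\log n)^2$ varies slowly, suffice by monotone interpolation. For $L^p$, use uniform integrability from the exponential tail of the first moment bound for large $c$.

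The main obstacle is Step 2. The tilted walk is time-inhomogeneous with step sizes $\asymp 1/\log m$, so barrier estimates must be done in the rescaled time. The small clades near the end, where the $O(1/m)$ errors and discreteness matter, must be controlled; I would stop the analysis at size $e^{\epsilon\log n}$, losing only $O(\epsilon^2)(\log n)^2$ depth.
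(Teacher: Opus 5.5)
\paragraph{The upper bound in Step 1 fails.} As set up, your Step 1 proves the constant $2C_{\mathrm{ht}}$, not $C_{\mathrm{ht}}$.

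Take your martingale $M_K=\sum_{v\text{ at depth }K}|v|^{\theta}\exp\bigl(\kappa(\theta)\sum_{u\prec v}h_{|u|-1}^{-1}\bigr)$ with $\E M_K\approx n^{\theta}$. The event that some depth-$K$ clade has size $\geq2$ only forces $M_K\geq 2^{\theta}\exp(\kappa(\theta)K/h_{n-1})$. Nothing in a first-moment bound at a single depth stops that lineage from spending almost all of its $K$ steps at size $n^{1-o(1)}$ and then dropping. Markov's inequality therefore requires $c>\theta/\kappa(\theta)$, and $\min_\theta\theta/\kappa(\theta)=1/v_*=2C_{\mathrm{ht}}$.

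Your factor $2$ comes from inserting the profile $a(t)\propto t$. That amounts to assuming the extremal lineage follows the typical tilted trajectory, which an upper bound may not assume.

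Optimizing over profiles more carefully does not repair this. Write a lineage in CTCS time as $-\log(\text{mass})\approx(\log n)\,\xi(t/\log n)$. Its depth is about $(\log n)^2\int(1-\xi)$, and the expected number of clades following $\xi$ is $n^{\int J(\xi')+o(1)}$, where $J(v)=\inf_{\theta>0}(\theta v-\kappa(\theta))$ is concave with $J(v_*)=0$. Start from the extremal profile $\xi(s)=v_*s$ on $[0,1/v_*]$ and make it slower early and faster late, keeping the endpoint fixed. This raises $\int(1-\xi)$ at first order. It lowers $\int J(\xi')$ only at second order, since the first variation is $J'(v_*)\int\delta\xi'=0$, and an $O(\epsilon^2)$ speed-up restores a positive exponent. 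So the expected number of clades at depth $c(\log n)^2$ grows like a positive power of $n$ for some $c>C_{\mathrm{ht}}$. No first moment at a fixed depth, with or without a level-dependent tilt, can give the sharp constant.

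\paragraph{What is missing.} You need a constraint at \emph{every intermediate CTCS time}: a lineage must at each time $t$ be an actual fragment, so $\xi(s)\geq v_*s$ for all $s$. This is exactly the largest-fragment bound $|\I_t(U_r)|\leq e^{-M(t)}$ with $M(t)/t\to v_*$.

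The paper implements this in four steps.
\begin{enumerate}
\item It passes to the marks $E_{\mathsf C}=h_{|\mathsf C|-1}\ell_{\mathsf C}$, which are i.i.d.\ $\Exp(1)$ given the tree. Hence $\max_r|L_n(r)-\widetilde L_n(r)|=O((\log n)^{3/2})$ almost surely.
\item It writes $\widetilde L_n(r)=\int_0^{D_n(r)}h_{N_n(t,U_r)-1}\,\dif t$.
\item It uses uniform occupancy bounds to replace $h_{N-1}$ by $\log_+(n|\I_t(U_r)|)$ plus $O(\log\log n)$.
\item It integrates against $(\log n-M(t))_+$, obtaining $(\log n)^2/(2v_*)$.
\end{enumerate}
You could apply your additive martingale at fixed CTCS times instead of on a depth line. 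Even then you would still need the depth-versus-CTCS-time comparison, which is what the exponential marks supply.

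\paragraph{The remaining steps.} Steps 2--3 (tilted spine, barrier second moment, boosting over many independent subtrees) are a plausible but much heavier route, and you leave their core, the barrier estimates for a walk with position-dependent steps of size $\asymp1/\log m$, unresolved. The paper avoids second moments entirely. It selects nested largest descendants on a time mesh $jT$, whose log-mass losses are i.i.d.\ copies of $M(T)$. The SLLN together with the occupancy bounds then gives the a.s.\ lower bound $T/(2\E M(T))$, and letting $T\to\infty$ uses $\E M(T)/T\to v_*$.

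Your Step 4 is sound. $L_n^*$ is nondecreasing in $n$ under the consistency coupling, so summable bounds along $n=2^k$ suffice. Even the non-sharp $2C_{\mathrm{ht}}$-type tail bound gives the uniform integrability needed for $L^p$ convergence.
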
 

Numerically, 
$C_{\mathrm{ht}}\approx0.976$.
For comparison, the height of a uniformly chosen leaf, divided by
$(\log n)^2$, converges in probability to $3/\pi^2\approx0.304$
\cite{AldousPittel}.
Thus the maximum and the typical leaf height have the same order of growth,
but their leading constants differ by a factor of more than three.
In their discussion of Open Problem~4,
Aldous and Janson~\cite[Section~3.13]{AldousJansonII}
raised the possibility, based on numerical observations for a greedy path,
that the leading coefficient might equal $3/\pi^2$.
Theorem~\ref{thm:main} shows that the tree height has a strictly larger
first-order constant.

\begin{figure}[t]
\centering
\begin{tikzpicture}[
  font=\footnotesize,
  br/.style={line width=.65pt,color=cladecol},
  hbr/.style={line width=1.15pt,color=hicol},
  sbr/.style={line width=1.15pt,color=subsetcol},
  cl/.style={rectangle,fill=cladecol,inner sep=1.3pt},
  lf/.style={circle,draw=cladecol,fill=white,line width=.5pt,inner sep=1.1pt},
  dt/.style={circle,fill=black!70,inner sep=.9pt},
  sdt/.style={circle,fill=subsetcol,inner sep=1.15pt},
  cbox/.style={line width=.7pt,rounded corners=2pt,
               dash pattern=on 2.2pt off 1.8pt},
  lab/.style={inner sep=1.2pt},
  clab/.style={inner sep=1pt,font=\scriptsize,text=cladecol},
]

\begin{scope}[x=0.62cm,y=0.62cm]
  \node[lab] at (3,6.62) {parent clade $\mathsf C=[7]$};
  \foreach \x/\n in {0/1,1/2,2/3,3/4,4/5,5/6,6/7}{
     \node[dt] at (\x,5.65) {};
     \node[lab,anchor=south,font=\tiny,yshift=1pt] at (\x,5.65) {\n};}
  \node[sdt] at (2,5.65) {};
  \node[sdt] at (4,5.65) {};
  \draw[cbox,color=cladecol] (-0.55,5.32) rectangle (6.55,6.24);

  \draw[sbr] (2.55,5.22) -- (1.1,4.48);
  \draw[br]  (3.45,5.22) -- (4.75,4.48);

  \node[sdt] at (0.75,3.9) {}; \node[lab,anchor=south,font=\tiny,yshift=1pt] at (0.75,3.9) {3};
  \node[sdt] at (1.45,3.9) {}; \node[lab,anchor=south,font=\tiny,yshift=1pt] at (1.45,3.9) {5};
  \draw[cbox,color=subsetcol] (0.35,3.57) rectangle (1.85,4.49);
  \node[lab,anchor=north,font=\scriptsize,text=subsetcol,yshift=-2pt] at (1.1,3.57)
     {$i=2$};

  \foreach \x/\n in {3.85/1,4.35/2,4.85/4,5.35/6,5.85/7}{
     \node[dt] at (\x,3.9) {}; \node[lab,anchor=south,font=\tiny,yshift=1pt] at (\x,3.9) {\n};}
  \draw[cbox,color=cladecol] (3.5,3.57) rectangle (6.2,4.49);
  \node[lab,anchor=north,font=\scriptsize,yshift=-2pt] at (4.85,3.57)
     { $m{-}i=5$};

  \node[lab,anchor=north,align=center,font=\scriptsize,text=black!65] at (3,2.2)
     {given $i=2$, the left child is a uniform $2$-element\\
      subset of $[7]$; the right child gets the rest};

  \node[anchor=north,font=\small] at (3,0.25) {(a) one split};
\end{scope}

\begin{scope}[xshift=6.2cm,x=0.78cm,y=0.85cm]
  \draw[line width=.5pt,-{Straight Barb[length=3.6pt,width=3pt]}]
        (-1.05,-0.2) -- (-1.05,4.5);
  \foreach \g in {0,1,2,3,4}{
     \draw[line width=.5pt] (-1.13,\g) -- (-0.97,\g);
     \node[lab,anchor=east,font=\scriptsize] at (-1.17,\g) {\g};}
  \node[lab,anchor=south,font=\scriptsize] at (-1.05,4.56) {height};

  \draw[br] (2.875,0) -- (2.875,1);
  \draw[br] (1.5,1) -- (2.875,1);
  \draw[br] (2.875,1) -- (4.25,1);
  \draw[br] (1.5,1) -- (1.5,2);
  \draw[br] (4.25,1) -- (4.25,2);
  \draw[br] (1,2) -- (1.5,2);
  \draw[br] (1.5,2) -- (2,2);
  \draw[br] (3,2) -- (4.25,2);
  \draw[br] (4.25,2) -- (5.5,2);
  \draw[br] (5.5,2) -- (5.5,3);
  \draw[br] (4.5,3) -- (5.5,3);
  \draw[br] (5.5,3) -- (6.5,3);
  \draw[br] (4.5,3) -- (4.5,4);
  \draw[br] (6.5,3) -- (6.5,4);
  \draw[br] (4,4) -- (4.5,4);
  \draw[br] (4.5,4) -- (5,4);
  \draw[br] (6,4) -- (6.5,4);
  \draw[br] (6.5,4) -- (7,4);

  \draw[hbr] (2.875,0) -- (2.875,1);
  \draw[hbr] (2.875,1) -- (4.25,1);
  \draw[hbr] (4.25,1) -- (4.25,2);
  \draw[hbr] (4.25,2) -- (5.5,2);
  \draw[hbr] (5.5,2) -- (5.5,3);
  \draw[hbr] (5.5,3) -- (6.5,3);
  \draw[hbr] (6.5,3) -- (6.5,4);
  \draw[hbr] (6.5,4) -- (7,4);

  \foreach \p in {(2.875,0),(1.5,1),(4.25,1),(5.5,2),(4.5,3),(6.5,3)}
     {\node[cl] at \p {};}

  \node[lf] at (1,2) {}; \node[lab,anchor=south,font=\scriptsize,yshift=1.5pt] at (1,2) {$5$};
  \node[lf] at (2,2) {}; \node[lab,anchor=south,font=\scriptsize,yshift=1.5pt] at (2,2) {$3$};
  \node[lf] at (3,2) {}; \node[lab,anchor=south,font=\scriptsize,yshift=1.5pt] at (3,2) {$2$};
  \node[lf] at (4,4) {}; \node[lab,anchor=south,font=\scriptsize,yshift=1.5pt] at (4,4) {$7$};
  \node[lf] at (5,4) {}; \node[lab,anchor=south,font=\scriptsize,yshift=1.5pt] at (5,4) {$4$};
  \node[lf] at (6,4) {}; \node[lab,anchor=south,font=\scriptsize,yshift=1.5pt] at (6,4) {$6$};
  \node[lf,draw=hicol,line width=.8pt] at (7,4) {}; \node[lab,anchor=south,font=\scriptsize,yshift=1.5pt,text=hicol] at (7,4) {$1$};

  \node[clab,anchor=east,xshift=-2pt] at (2.875,-0.05) {$[7]$};
  \node[clab,anchor=east,xshift=-2pt,text=subsetcol] at (1.5,1) {$\{3,5\}$};
  \node[clab,anchor=south west,xshift=2pt,yshift=1pt,text=hicol] at (4.25,1) {$\{1,2,4,6,7\}$};
  \node[clab,anchor=south west,xshift=1pt,text=hicol] at (5.5,2) {$\{1,4,6,7\}$};

  \node[lab,anchor=west,font=\scriptsize,text=hicol] at (7.1,4) {$L_7(1)=4=L_7^*$};

  \node[anchor=north,font=\small] at (3,-0.75) {(b) a realization of $\DTCS(7)$};
\end{scope}
\end{tikzpicture}
\caption{One splitting step (a) and the tree it generates (b): the split
in (a) is the root split of (b), every edge has unit length, and the
highlighted ancestral path attains the height $L_7^*=4$.}
\label{fig:dtcs-definition}
\end{figure}

\subsection{Related works}
\label{sec:related-work}

Embedding discrete random trees into continuous-time branching processes
is a powerful approach to their analysis, dating back at least to
Pittel~\cite{Pittel1994}.
In our setting, the continuous-time
critical beta-splitting tree $\CTCS(n)$
\cite{AldousPittel, AldousJansonII, AldousJansonIII}
is obtained by assigning a lifetime to each internal clade of $\DTCS(n)$,
that is, each clade containing at least two labels.
Conditional on $\DTCS(n)$, these lifetimes are independent exponentials,
with rate $h_{m-1}$ for a clade of size $m\geq2$.
This choice of holding times allows one to draw on the
theory of homogeneous fragmentation processes, since $\CTCS(n)$ can be
realized as a finite restriction of such a process
(see \cite[Section 4]{AldousJansonIII} or Section~\ref{sec:fragmentation} for details).

The height and time-height of a uniformly chosen leaf have both been
studied. Let $D_n(r)$ denote the time-height of
leaf $\{r\}$ in $\CTCS(n)$, namely the sum of the lifetimes of its
ancestor clades, excluding the leaf itself.
Aldous and Pittel~\cite{AldousPittel}
established moment asymptotics and central limit theorems for both
heights. In particular, let $U\sim\mathrm{Unif}(0,1)$ be independent of
the tree, so that $\lceil nU\rceil$ is a uniformly chosen label. Then 
  \begin{align}
    \frac{L_n(\lceil nU\rceil)-\frac{3}{\pi^2}(\log n)^2}
         {(\log n)^{3/2}}
    &\xrightarrow[n\to\infty]{\mathrm{law}}
      \mathcal N\! \Bigl(  0,\frac{144\zeta(3)}{\pi^6}\Bigr) ,\label{uleaf-clt-1}\\
    \frac{D_n(\lceil nU\rceil)-\frac{6}{\pi^2}\log n}
         {(\log n)^{1/2}}
    &\xrightarrow[n\to\infty]{\mathrm{law}}
      \mathcal N\! \Bigl(  0,\frac{432\zeta(3)}{\pi^6}\Bigr)  ,
  \end{align} 
where $\zeta$ is the Riemann zeta function.
Using Mellin analysis, Aldous and
Janson~\cite{AldousJansonIV} obtained full asymptotic
expansions for both mean heights, together with sharper variance
asymptotics and large deviation estimates for the time-height.
Kolesnik~\cite{KolesnikContraction} gave alternative
proofs of both central limit theorems via the contraction method,
including quantitative bounds on the normal approximation.

The ancestral clade sizes of a uniformly chosen leaf form the harmonic
descent chain. Iksanov~\cite{IksanovHarmonicDescent} connected this chain
with regenerative composition structures, giving a 
proof of its limiting hitting probabilities using renewal theory. Building on this
representation, Iksanov, Nikitin, and
Yakymiv~\cite{IksanovNikitinYakymiv} proved a joint central
limit theorem for the height and time-height of a uniformly chosen
leaf, together with the numbers of ancestral clade-size decrements of
each fixed size.

Let $D_n^*:=\max_{r\in[n]}D_n(r)$ be the maximum time-height.
Applying Joseph's height theorem~\cite[Theorem~1]{Joseph} to the
integer-time skeleton of the homogeneous fragmentation representation
of Aldous and Janson~\cite[Section~4]{AldousJansonIII} yields
$D_n^*/\log n\to2$ in probability. Thus $D_n^*$ has very different asymptotic behavior comparing with $L_n^*$, because  $L_n^*$ grows on the
scale $(\log n)^2$ by \cite{AldousPittel}. 
The convergence in distribution of $D_n^*-2\log n$ to a randomly
shifted Gumbel law was established independently by Chorbadzhiyska, Minchev, and
Savov~\cite[Theorem~2.12]{ChorbadzhiyskaMinchevSavov} and in our companion
paper~\cite{MaCTCS}. 
The companion paper also describes a freezing transition between $r=1$
and $r\geq2$ in the centering and limiting laws of the $r$-shattering
time---the first time at which all clades have size at most $r$---and
establishes limit theorems for the associated extremal processes and
clade-count dynamics.

\subsection{Roadmap of the proof} 
\label{sec:roadmap}   
Write $\ell_{\mathsf C}$ for the lifetime of an internal clade
$\mathsf C$ in  $\CTCS(n)$ introduced in
Section~\ref{sec:related-work}.
For $r\in[n]$, let $\mathcal A_n(r)$ be the set of the ancestor clades of leaf
$\{r\}$,  excluding the leaf itself. 
Using this notation, the height of $\{r\}$ in $\DTCS(n)$ and  the time-height of  $\{r\}$ in $\CTCS(n)$   are respectively
\[
  L_n(r)=\sum_{\mathsf{C}\in\mathcal A_n(r)}1,
  \qquad
  D_{n}(r)=\sum_{\mathsf{C}\in\mathcal A_n(r)}\ell_{\mathsf{C}}.
\]

\paragraph{Rescaling the clade lifetimes.}
To compare leaf height with these lifetimes, we multiply each lifetime of an internal clade 
by its splitting rate. This choice is inspired by Dyszewski, Johnston, Palau,
and Prochno~\cite[Section~3.3]{DyszewskiJohnstonPalauProchno}, who construct
a Crump--Mode--Jagers branching process from a self-similar fragmentation
via time changes depending on each fragment’s mass. Set  
\[ E_{\mathsf{C}}:=h_{|\mathsf{C}|-1}\ell_{\mathsf{C}} \quad \text{ for each clade  $\mathsf{C}$ with $|\mathsf{C}|\ge 2$.}  \] 
Then conditionally on $\DTCS(n)$, $\{E_{\mathsf{C}}: |\mathsf{C}| \ge 2\}$ are i.i.d. standard exponentials; see Figure~\ref{fig:dtcs-rescaling}.   Define
\begin{equation}\label{original:eq:Q-leaf-definition}
  \widetilde L_n(r):=\sum_{\mathsf{C}\in\mathcal A_n(r)} E_{\mathsf{C}}  \quad , \quad 
  \widetilde L_n^*:=\max_{1\leq r\leq n}\widetilde L_n(r).
\end{equation} 
Lemma~\ref{lem:marked-graph-comparison} shows that, 
\[
  \max_{r\leq n}|L_n(r)-\widetilde L_n(r)|
  =O((\log n)^{3/2})
  \qquad\text{almost surely}.
\]
This reduces the almost-sure height asymptotic to
proving that $\widetilde L_n^*/(\log n)^2\to C_{\mathrm{ht}}$ almost surely.

\begin{figure}[t]
\centering
\begin{tikzpicture}[
  font=\footnotesize,
  br/.style={line width=.6pt,color=cladecol},
  hbr/.style={line width=1.1pt,color=hicol},
  cl/.style={rectangle,fill=cladecol,inner sep=1.15pt},
  lf/.style={circle,draw=cladecol,fill=white,line width=.5pt,inner sep=1pt},
  guide/.style={line width=.4pt,color=black!30,dash pattern=on 2pt off 1.6pt},
  lab/.style={inner sep=1pt},
]

\begin{scope}[x=0.5cm,y=0.78cm]
  \draw[line width=.5pt,-{Straight Barb[length=3.4pt,width=3pt]}]
        (-0.9,-0.15) -- (-0.9,5.2);
  \foreach \t in {0,1,2,3,4,5}{
     \draw[line width=.5pt] (-0.98,\t) -- (-0.82,\t);
     \node[lab,anchor=east,font=\scriptsize] at (-1.02,\t) {\t};}
  \node[lab,anchor=south,font=\scriptsize] at (-0.9,5.26) {time-height};

  \draw[br] (2.875,0) -- (2.875,0.35);
  \draw[br] (1.5,0.35) -- (2.875,0.35);
  \draw[br] (2.875,0.35) -- (4.25,0.35);
  \draw[br] (1.5,0.35) -- (1.5,1.65);
  \draw[br] (4.25,0.35) -- (4.25,0.90);
  \draw[br] (1,1.65) -- (1.5,1.65);
  \draw[br] (1.5,1.65) -- (2,1.65);
  \draw[br] (3,0.90) -- (4.25,0.90);
  \draw[br] (4.25,0.90) -- (5.5,0.90);
  \draw[br] (5.5,0.90) -- (5.5,1.65);
  \draw[br] (4.5,1.65) -- (5.5,1.65);
  \draw[br] (5.5,1.65) -- (6.5,1.65);
  \draw[br] (4.5,1.65) -- (4.5,2.55);
  \draw[br] (6.5,1.65) -- (6.5,3.15);
  \draw[br] (4,2.55) -- (4.5,2.55);
  \draw[br] (4.5,2.55) -- (5,2.55);
  \draw[br] (6,3.15) -- (6.5,3.15);
  \draw[br] (6.5,3.15) -- (7,3.15);

  \draw[hbr] (2.875,0) -- (2.875,0.35);
  \draw[hbr] (2.875,0.35) -- (4.25,0.35);
  \draw[hbr] (4.25,0.35) -- (4.25,0.90);
  \draw[hbr] (4.25,0.90) -- (5.5,0.90);
  \draw[hbr] (5.5,0.90) -- (5.5,1.65);
  \draw[hbr] (5.5,1.65) -- (6.5,1.65);
  \draw[hbr] (6.5,1.65) -- (6.5,3.15);
  \draw[hbr] (6.5,3.15) -- (7,3.15);

  \foreach \p in {(2.875,0),(1.5,0.35),(4.25,0.35),(5.5,0.90),(4.5,1.65),(6.5,1.65)}
     {\node[cl] at \p {};}
  \foreach \p in {(1,1.65),(2,1.65),(3,0.90),(4,2.55),(5,2.55),(6,3.15)}
     {\node[lf] at \p {};}
  \node[lf,draw=hicol,line width=.8pt] at (7,3.15) {};

  \node[lab,font=\tiny,text=black!55,anchor=west,xshift=3pt] at (2.875,0.17) {$\ell\!\approx\!.35$};
  \node[lab,font=\tiny,text=black!55,anchor=east,xshift=-3pt] at (1.5,1.0) {$\ell\!\approx\!1.30$};
  \node[lab,font=\scriptsize,text=hicol,anchor=south east] at (7.25,3.30) {$D_7(1)\!=\!3.15$};

  \node[anchor=north,font=\small] at (3.2,-0.85) {(a) $\CTCS(7)$};
\end{scope}

\begin{scope}[xshift=4.85cm,x=0.5cm,y=0.78cm]
  \foreach \g in {1,2,3,4}{ \draw[guide] (-0.35,\g) -- (7.3,\g); }
  \draw[line width=.5pt,-{Straight Barb[length=3.4pt,width=3pt]}]
        (-0.9,-0.15) -- (-0.9,5.2);
  \foreach \g in {0,1,2,3,4,5}{
     \draw[line width=.5pt] (-0.98,\g) -- (-0.82,\g);
     \node[lab,anchor=east,font=\scriptsize] at (-1.02,\g) {\g};}
  \node[lab,anchor=south,font=\scriptsize] at (-0.9,5.26) {rescaled time-height};

  \draw[br] (2.875,0) -- (2.875,0.858);
  \draw[br] (1.5,0.858) -- (2.875,0.858);
  \draw[br] (2.875,0.858) -- (4.25,0.858);
  \draw[br] (1.5,0.858) -- (1.5,2.158);
  \draw[br] (4.25,0.858) -- (4.25,2.004);
  \draw[br] (1,2.158) -- (1.5,2.158);
  \draw[br] (1.5,2.158) -- (2,2.158);
  \draw[br] (3,2.004) -- (4.25,2.004);
  \draw[br] (4.25,2.004) -- (5.5,2.004);
  \draw[br] (5.5,2.004) -- (5.5,3.379);
  \draw[br] (4.5,3.379) -- (5.5,3.379);
  \draw[br] (5.5,3.379) -- (6.5,3.379);
  \draw[br] (4.5,3.379) -- (4.5,4.279);
  \draw[br] (6.5,3.379) -- (6.5,4.879);
  \draw[br] (4,4.279) -- (4.5,4.279);
  \draw[br] (4.5,4.279) -- (5,4.279);
  \draw[br] (6,4.879) -- (6.5,4.879);
  \draw[br] (6.5,4.879) -- (7,4.879);

  \draw[hbr] (2.875,0) -- (2.875,0.858);
  \draw[hbr] (2.875,0.858) -- (4.25,0.858);
  \draw[hbr] (4.25,0.858) -- (4.25,2.004);
  \draw[hbr] (4.25,2.004) -- (5.5,2.004);
  \draw[hbr] (5.5,2.004) -- (5.5,3.379);
  \draw[hbr] (5.5,3.379) -- (6.5,3.379);
  \draw[hbr] (6.5,3.379) -- (6.5,4.879);
  \draw[hbr] (6.5,4.879) -- (7,4.879);

  \foreach \p in {(2.875,0),(1.5,0.858),(4.25,0.858),(5.5,2.004),(4.5,3.379),(6.5,3.379)}
     {\node[cl] at \p {};}
  \foreach \p in {(1,2.158),(2,2.158),(3,2.004),(4,4.279),(5,4.279),(6,4.879)}
     {\node[lf] at \p {};}
  \node[lf,draw=hicol,line width=.8pt] at (7,4.879) {};

  \node[lab,font=\tiny,text=black!55,anchor=west,xshift=3pt] at (2.875,0.43) {$E\!\approx\!.86$};
  \node[lab,font=\tiny,text=black!55,anchor=east,xshift=-3pt] at (1.5,1.5) {$E\!\approx\!1.30$};
  \node[lab,font=\scriptsize,text=hicol,anchor=south east] at (7.25,4.98) {$\widetilde L_7(1)\!\approx\!4.88$};

  \node[anchor=north,font=\small,align=center] at (3.2,-0.85)
     {(b) $\CTCS(7)$ with rescaled lifetimes};
\end{scope}

\begin{scope}[xshift=9.7cm,x=0.5cm,y=0.78cm]
  \draw[line width=.5pt,-{Straight Barb[length=3.4pt,width=3pt]}]
        (-0.9,-0.15) -- (-0.9,5.2);
  \foreach \g in {0,1,2,3,4,5}{
     \draw[line width=.5pt] (-0.98,\g) -- (-0.82,\g);
     \node[lab,anchor=east,font=\scriptsize] at (-1.02,\g) {\g};}
  \node[lab,anchor=south,font=\scriptsize] at (-0.9,5.26) {height};

  \draw[br] (2.875,0) -- (2.875,1);
  \draw[br] (1.5,1) -- (2.875,1);
  \draw[br] (2.875,1) -- (4.25,1);
  \draw[br] (1.5,1) -- (1.5,2);
  \draw[br] (4.25,1) -- (4.25,2);
  \draw[br] (1,2) -- (1.5,2);
  \draw[br] (1.5,2) -- (2,2);
  \draw[br] (3,2) -- (4.25,2);
  \draw[br] (4.25,2) -- (5.5,2);
  \draw[br] (5.5,2) -- (5.5,3);
  \draw[br] (4.5,3) -- (5.5,3);
  \draw[br] (5.5,3) -- (6.5,3);
  \draw[br] (4.5,3) -- (4.5,4);
  \draw[br] (6.5,3) -- (6.5,4);
  \draw[br] (4,4) -- (4.5,4);
  \draw[br] (4.5,4) -- (5,4);
  \draw[br] (6,4) -- (6.5,4);
  \draw[br] (6.5,4) -- (7,4);

  \draw[hbr] (2.875,0) -- (2.875,1);
  \draw[hbr] (2.875,1) -- (4.25,1);
  \draw[hbr] (4.25,1) -- (4.25,2);
  \draw[hbr] (4.25,2) -- (5.5,2);
  \draw[hbr] (5.5,2) -- (5.5,3);
  \draw[hbr] (5.5,3) -- (6.5,3);
  \draw[hbr] (6.5,3) -- (6.5,4);
  \draw[hbr] (6.5,4) -- (7,4);

  \foreach \p in {(2.875,0),(1.5,1),(4.25,1),(5.5,2),(4.5,3),(6.5,3)}
     {\node[cl] at \p {};}
  \foreach \p in {(1,2),(2,2),(3,2),(4,4),(5,4),(6,4)}
     {\node[lf] at \p {};}
  \node[lf,draw=hicol,line width=.8pt] at (7,4) {};

  \node[lab,anchor=south east,font=\scriptsize,text=hicol] at (7.25,4.14) {$L_7(1)\!=\!4$};

  \node[anchor=north,font=\small] at (3.2,-0.85) {(c) $\DTCS(7)$};
\end{scope}
\end{tikzpicture}
\caption{The tree of Figure~\ref{fig:dtcs-definition}(b) drawn with three
edge lengths: the $\CTCS(7)$ lifetimes $\ell_{\mathsf C}$ (a), the
rescaled lifetimes $E_{\mathsf C}=h_{|\mathsf C|-1}\ell_{\mathsf C}$ (b),
and unit edges (c). The $E_{\mathsf C}$ are i.i.d.\ standard
exponentials, so (b) differs from (c) only by random edges of mean one in
place of length one; the dashed lines mark the integers.}
\label{fig:dtcs-rescaling}
\end{figure}

\paragraph{From leaf height to a time integral.}
Following Aldous and Janson~\cite[Section~4]{AldousJansonIII},
we realize all $\CTCS(n)$ using a single homogeneous interval fragmentation
$\I=(\mathcal I_t)_{t\geq0}$, described in Section~\ref{sec:fragmentation},
and an independent sequence $U_1,U_2,\ldots$ of i.i.d. uniform points in
$(0,1)$. 
Roughly speaking  $(\mathcal{I}_t)_{t\ge0}$ is a stochastic process taking values in open subsets of $(0,1)$ having branching property, with $\I_{0}=(0,1)$ and
$\I_{t+s}\subset  \I_{t}$ for all $s,t\geq0$.   
 Labels in $[n]$ that lie in the same interval component of $\I_t$ form the clade at time $t$. The resulting clade genealogy, equipped with the clades' birth and death
times, has the law of $\CTCS(n)$, see Lemma \ref{lem:finite-restriction}.

Write 
$\I_t(U_r)$ for the interval  fragment containing $U_r$, $N_n(t,U_r)$ for the
number of labels in $[n]$ that lie in $\I_t(U_r)$. Summing the rescaled lifetimes along the ancestral path of leaf
$\{r\}$ gives 
\begin{equation}\label{eq:rescaled-Lnr}
  \widetilde L_n(r)
  =\sum_{\mathsf C\in\mathcal A_n(r)}E_{\mathsf C}
  =\int_0^{D_n(r)}h_{N_n(t,U_r)-1}\,\dif t.
\end{equation}
Lemma~\ref{lem:marked-graph-comparison} gives the formal identity and its proof.

\paragraph{Proof sketch for the upper bound.} To control the integral \eqref{eq:rescaled-Lnr}, one has to  compare the number of
labels in a component of $\I_t$   with its length.
Lemma~\ref{lem:all-interval-occupancy} provides events
$G_n$ occurring for all sufficiently large $n$ almost surely, on which
\[ \text{every interval $I\subset(0,1)$ contains at most
$C(n|I|+\log n)$ labels in $[n]$. } \]  
 This single event controls all
interval fragments of $\I_t$ at all times $t$, and gives  
\[
  h_{N_n(t,U_r)-1}
  \leq  \bigl(    \log  (n  |\mathcal{I}_t(U_r)| )  \bigr)_+ + O(\log\log n).
\]

Lemma~\ref{lem:isolation-time} gives $D_n^*\leq4\log n$ eventually 
so the accumulated error in \eqref{eq:rescaled-Lnr} is
$o_{\mathrm{a.s.}}((\log n)^2)$.\footnote{
For a deterministic sequence $b_n>0$, we write
$R_n=o_{\mathrm{a.s.}}(b_n)$ if $R_n/b_n\to0$ almost surely.
}
Let $\I_t^{(1)}$ be a largest interval component of $\I_t$ and set
$M(t):=\log(1/|\I_t^{(1)}|)$. Since
$|\I_t(U_r)|\leq|\I_t^{(1)}|=e^{-M(t)}$, we get
\[
  \widetilde L_n^*
  \leq \int_0^\infty(\log n-M(t))_+\,\dif t
       + o_{\mathrm{a.s.}}((\log n)^2) .
\]
The SLLN of largest fragment \cite[Equation~(9)]{BertoinAsymptotic} (see also \cite{KyprianouLaneMorters}) gives
$M(t)/t\to v_*:=\kappa'(\theta_*)$ almost surely. Consequently, we obtain 
\[
  \begin{aligned}
    \int_0^\infty(\log n-M(t))_+\,\dif t
    &= [1+o_{\mathrm{a.s.}}(1)]
       \int_0^{(\log n)/v_*}(\log n-v_*t)\,\dif t\\
    &=\frac{1+o_{\mathrm{a.s.}}(1)}{2v_*}(\log n)^2,
  \end{aligned}
\]
which yields the desired upper bound.

\paragraph{Proof sketch for the lower bound.}
The largest fragments at different times need not belong to a single
ancestral lineage. Consequently, the upper-bound integral need not
be attained by any one leaf, and the largest fragment SLLN alone
does not give the lower bound.
We address this by choosing a nested sequence of interval fragments
on a fixed time mesh, following the bounded-lookahead idea of
McDiarmid~\cite[Algorithm~A3]{McDiarmid1990} and
Addario-Berry and Maillard~\cite[Section~2]{AddarioBerryMaillard};
see also Shi~\cite[Theorem~1.3]{ShiBRW} for a similar idea.

To bound $\widetilde L_n^*$ from below via
\eqref{eq:rescaled-Lnr}, we again compare label counts with interval
fragment lengths.
Lemma~\ref{lem:all-interval-occupancy} also gives the
lower estimate on the same event $G_n$:
\begin{equation}\label{original:eq:occupancy-2}
  \#\{j\in[n]:U_j\in I\}\geq cn|I|
  \quad \forall \, \text{  interval }I\subset(0,1)
  \text{ with } |I|\geq C \tfrac{\log n}{n}
\end{equation}
Here $c,C>0$ are absolute constants.

Fix $T>0$, start with $J_0=(0,1)$, and at time $jT$ select a largest
descendant $J_j$ of the previously chosen fragment $J_{j-1}$.  The branching  property (or fragmentation  property) implies the  
subsequent
evolution of $J_{j-1}$, after rescaling it to $(0,1)$, is independent of  its past and has the same law
as $\I$. 
Hence the successive logarithmic mass losses $\{\log ({|J_{j-1}|}/{|J_{j}|})\}_{j \ge 1}$ are independent with
the law of $M(T)$.  The SLLN gives
$ \log ({1}/{|J_{j}|}) /j\to \mu_T := \E[M(T)]$ almost surely.

Let $ \mathcal{K}_n$ be the last index for which
$|J_{\mathcal{K}_n}|\geq(\log n)^2/n$. The previous SLLN  implies
$\mathcal{K}_n= [\frac{1}{\mu_T}+o_{\mathrm{a.s.}}(1)] \log n$ almost surely.
On $G_n$, this terminal interval   $J_{\mathcal{K}_n}$ contains at least
$c(\log n)^2$ labels. Choose one of them. Its ancestor throughout time window
$[(j-1)T, j T]$ contains $J_j$, so the lower
occupancy estimate \eqref{original:eq:occupancy-2} and $h_{m-1}\geq\log m$ give,  
\begin{equation}
     \widetilde L_n^{*} \ge \widetilde L_n(r)
     \geq T\sum_{j=1}^{\mathcal{K}_n} \bigl[ 
       \log n-\log(1/|J_j|)   -\Theta(1) \bigr]. \label{original:eq-intro-tildeL-1}
\end{equation}
Using $\log(1/|J_j|)/j\to\mu_T$ shows that
the right-hand side, divided by $(\log n)^2$, converges a.s.
to $T/(2\mu_T)$. In Lemma~\ref{lem:largest-fragment-speed} we show 
the LLN $M(t)/t\to v_*$ also holds in $L^1$, so
$\mu_T/T\to v_*$ as $T\to\infty$.
Since $G_n$ occurs eventually, the lower
bound $T/(2\mu_T)$ holds a.s. for each fixed $T$.
Taking a countable intersection over positive integers $T$ and then
letting $T\to\infty$ yields the matching lower bound $1/(2v_*)$.

\subsection{Further Questions}
\label{sec:further-questions}

Although we focus on discrete critical beta-splitting trees, we expect
the reduction of tree height to an integrated splitting rate
to apply more broadly. Our companion paper~\cite{MaCTCS} establishes
limit theorems for the time-height of  a wide class of
homogeneous fragmentation trees. A natural next step is to determine
the height asymptotics of the corresponding genealogical trees.

A motivating example is the phase transition at $\beta=-1$ in
Aldous's beta-splitting tree family \cite{AldousCladograms}   with parameter $\beta>-2$. In \cite[Proposition 4]{AldousCladograms}  proved the   height  has logarithmic
scale $\log n$ for $\beta>-1$
, and polynomial scale
$n^{-\beta-1}$ for $-2<\beta<-1$.
At the critical value $\beta=-1$, Aldous and Pittel~\cite{AldousPittel}
established the intermediate scale $(\log n)^2$. 

How does this picture extend to general fragmentation trees?
Under certain regular variation assumptions for the dislocation measure, Haas, Miermont, Pitman, and
Winkel~\cite[Theorem~2]{HaasMiermontPitmanWinkel} already obtain
height scales of the form $n^\alpha L(n)$, with $0<\alpha<1$
and $L$ slowly varying. Beyond these results, it would be interesting
to develop a general classification of height growth and phase
transitions. Which dislocation measures give logarithmic,
squared-logarithmic, or polynomial growth, and what additional
intermediate scales can occur?

Another direction is to study finer asymptotics for $L_n^*$.
The largest-fragment law admits a logarithmic refinement
\cite[Theorem~2.2]{KyprianouLaneMorters}, but our lower-bound construction
provides no quantitative error control as the block length $T$ grows.
The present method therefore leaves second-order corrections and
fluctuations unresolved, leading to the following question.

\begin{openproblem}[Second-order asymptotics]
\label{prob:height-second-order}
Determine the second-order fluctuations of $L_n^*$.
In particular, motivated by \eqref{uleaf-clt-1}, is there a nondegenerate random variable $Z$ such that 
\[
  \frac{L_n^*-C_{\mathrm{ht}}(\log n)^2}{(\log n)^{3/2}}
  \xrightarrow[n\to\infty]{\mathrm{law}} Z ?
\]
If so is the limiting law
Gaussian? 

A competing possibility is a correction of order $(\log n)^{4/3}$.
This is motivated by the results on consistent minimal displacement of
branching random walks due to Fang and
Zeitouni~\cite[Theorem~1]{FangZeitouni} and Faraud, Hu, and
Shi~\cite[Theorem~1.4]{FaraudHuShi}.  In the fragmentation representation, integrating a
deviation of order $(\log n)^{1/3}$ over a time interval of order
$\log n$ suggests a correction of order $(\log n)^{4/3}$.
Does this scale govern the almost-sure second-order behavior of $L_n^*$
under the coupling above?
These are alternative possibilities: an almost-sure
$O((\log n)^{4/3})$ bound for the centered height would force the
displayed normalization to converge to zero.
\end{openproblem}

 \section{Fragmentation representation}
\label{sec:fragmentation}

\subsection{Interval fragmentation}

Aldous and Janson~\cite[Section~4.2]{AldousJansonIII}
identified the continuous-time critical beta-splitting trees as finite
restrictions of a homogeneous partition fragmentation process. We use
an interval fragmentation realization of this representation to compare clade sizes
with fragment masses simultaneously over all times and lineages.

A homogeneous \emph{interval fragmentation} $\I=(\I_{t})_{t\geq0}$ takes
values in the open subsets of $(0,1)$. It starts from $\I_{0}=(0,1)$
and evolves by breaking its interval components
into smaller intervals, so that $\I_{t+s}\subset \I_{t}$ for every $s,t\geq0$.
Conditional on $\I_{t}$, the future evolutions inside its components
are independent copies of the original process, rescaled in space to
their respective intervals. Homogeneity means that their time scales
are unchanged; see
\cite[Definitions~1--2]{BertoinSelfSimilar}
with self-similarity index $\alpha=0$.  We call  
the connected components of $\I_{t}$   the fragments present at
time $t$. The mass of a fragment is its interval length.

Such a process is characterized by its interval
dislocation measure $\nu$ on open subsets of $(0,1)$,
and its left and right erosion coefficients; see
\cite[Theorems~2.7 and~3.5]{Basdevant}. Roughly speaking,
independently for each interval component, dislocation events are driven by
a Poisson point process with intensity
$\dif t\,\nu(\dif U)$: at an atom $(t,U)$, the component
$(a,b)$ is replaced by the open set $a+(b-a)U$, whose connected
components are the offspring intervals.
 
To construct the interval fragmentation corresponding to the critical beta-splitting
trees, we take zero erosion and choose the interval dislocation
measure to be the pushforward of
\begin{equation}\label{eq:oriented-dislocation}
  \rho(\dif x)=\frac{\dif x}{2x(1-x)} \ , \quad 0<x<1,
\end{equation}
under the map $x\mapsto(0,x)\cup(x,1)$.
Thus, a dislocation with parameter $x$ replaces a component $(a,b)$
by the two intervals
$(a,a+x(b-a))$ and $(a+x(b-a),b)$.
The process is conservative: almost surely, for every $t\geq0$,
the component lengths of $\mathcal I_t$ sum to one.

The following lemma combines Bertoin's process-level paintbox
representation \cite[Section~3.2, Lemmas~5--6]{BertoinSelfSimilar}
with the fragmentation representation of $\CTCS$ due to
Aldous and Janson~\cite{AldousJansonIII}.

\begin{lemma} \label{lem:finite-restriction} Let $\I=(\I_t)_{t\geq0}$ be the interval fragmentation described above,
let $U_1,U_2,\ldots$ be i.i.d. uniform points of $(0,1)$, independent
of $\I$.  
For each $n\geq1$, define the clades at time $t\geq0$ as the
nonempty sets
\[
  \{j\in[n]:U_j\in J\},
\]
where $J$ ranges over the interval components of $\I_t$;
any remaining labels form singleton clades.

The genealogy of these clades, determined by set inclusion and
equipped with the lifetimes and the
left-to-right order inherited from the intervals, has the law
of $\CTCS(n)$.
\end{lemma}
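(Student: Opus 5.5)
We need to prove Lemma \ref{lem:finite-restriction}: the clades induced by the interval fragmentation $\I$ and the i.i.d.\ uniform labels $U_1,\dots,U_n$ form a tree with the law of $\CTCS(n)$. The plan is to show that the induced clade process is a continuous-time Markov chain. We then identify its jump rates and split law with those of $\CTCS(n)$: a clade of size $m\ge2$ should live an $\Exp(h_{m-1})$ time and then split into an ordered pair of sizes $(i,m-i)$ with probability $q(m,i)$. The left child should be a uniform $i$-subset, and distinct clades should evolve independently.

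First, the Markov and branching structure. Condition on $\I_t$ and on which labels lie in which component. By the fragmentation (branching) property, the evolutions inside distinct components are independent rescaled copies of $\I$. Given the set of labels lying in a component $J=(a,b)$, the positions of those labels are i.i.d.\ uniform on $J$, independent of everything else. Rescaling $J$ to $(0,1)$ therefore reduces the future of each clade of size $m$ to a fresh copy of the whole construction with $m$ i.i.d.\ uniform points. This gives independence across clades and time-homogeneity, so it suffices to analyse the first split of the clade $[m]$ started from $(0,1)$.

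Second, the rate and split law. Dislocations of the root component arrive as a Poisson process in $t$ with intensity $\rho(\dif x)$, which has infinite total mass. A dislocation at $x$ splits the clade exactly when the points $U_1,\dots,U_m$ do not all fall on one side of $x$. Let $i$ be the number of points below $x$; given $x$, $i$ is $\mathrm{Bin}(m,x)$. By Poisson thinning, the effective dislocations, i.e.\ those producing $1\le i\le m-1$, arrive at rate
\[
  \int_0^1 \bigl(1-x^m-(1-x)^m\bigr)\frac{\dif x}{2x(1-x)} .
\]
Dislocations with $i\in\{0,m\}$ do not change the clade. Moreover, since the measure is homogeneous, the points remain uniform in the surviving piece after rescaling, so these ineffective dislocations do not affect the process at clade level. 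The intensity of effective splits of type $i$ is
\[
  \binom{m}{i}\int_0^1 x^{i-1}(1-x)^{m-i-1}\,\frac{\dif x}{2}
  =\frac{1}{2}\binom{m}{i}B(i,m-i)=\frac{m}{2\,i(m-i)} .
\]
Summing over $1\le i\le m-1$ gives the total rate $\tfrac12\sum_i\bigl(\tfrac1i+\tfrac1{m-i}\bigr)=h_{m-1}$. The normalised split law is then exactly $q(m,i)$ from \eqref{eq:splitting-rule}. By exchangeability of the $U_j$, the set of labels falling to the left of $x$ is a uniform $i$-subset, and it becomes the left child, giving the inherited left-to-right order.

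Third, the assembly. The lifetimes, split sizes and label assignments have the correct conditional laws, and independence holds across clades. Hence the genealogy ordered by set inclusion is the $\CTCS(n)$ tree: its $\DTCS(n)$ shape is generated by $q$, and given the shape the lifetimes are independent exponentials with rates $h_{m-1}$.

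The main obstacle is making the first step rigorous at the process level, since infinitely many dislocations occur in any time interval. The strong Markov property must be applied at the (random) split times, and one has to justify that the label positions inside a fragment remain conditionally uniform. I would handle this by invoking Bertoin's paintbox lemmas \cite[Lemmas~5--6]{BertoinSelfSimilar}, which give exactly that the restriction to $[m]$ is a Markov chain on partitions with the rates above. Alternatively, the rate identity can be matched against the partition dislocation measure of Aldous and Janson \cite[Section~4.2]{AldousJansonIII}.
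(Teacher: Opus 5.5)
Your proposal is correct, but its main line is more hands-on than the paper's proof; your closing ``alternatively'' is essentially what the paper does.

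The paper never computes restricted rates. It cites Basdevant to identify the partition process induced by $\I$ and $(U_j)$ as a homogeneous fragmentation whose dislocation measure is the image of $\rho$ under $x\mapsto(\max\{x,1-x\},\min\{x,1-x\},0,\ldots)$. It notes that this is the measure in Aldous--Janson's equation~(4.12), lets their representation theorem supply the unordered genealogy and lifetimes, and restores the left-to-right order via the symmetry $x\leftrightarrow1-x$ of $\rho$.

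You instead re-derive, clade by clade, what that theorem provides. The Beta integral $\binom{m}{i}\int_0^1x^i(1-x)^{m-i}\rho(\dif x)=\frac{m}{2i(m-i)}$ yields the total rate $h_{m-1}$ and the split law $q(m,i)$. Because you follow the labels to the left of $x$, you also get the ordered split law directly. Your route is more self-contained and explains where $h_{m-1}$ and $q$ come from. The paper's version is shorter and pushes all process-level issues into citations.

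Two details deserve tightening. First, ``Poisson thinning'' is not literally applicable, since the label positions are fixed rather than independent marks. Given the rescaled positions $V_1,\ldots,V_m$ in the current fragment, the effective rate is $\rho((\min_jV_j,\max_jV_j))$, which changes after each ineffective dislocation. The hazard is nevertheless constant because, conditionally on $(\I_s)_{s\leq t}$ and on the $m$ labels sharing a fragment $J$ of $\I_t$, they are i.i.d.\ uniform on $J$. As a consistency check, $\P(\text{no split by }t)=\E\sum_J|J|^m=\E|\I_t(U_1)|^{m-1}=e^{-t\kappa(m)}=e^{-th_{m-1}}$ by the subordinator computation in Lemma~\ref{lem:largest-fragment-speed}.

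Second, the paintbox lemmas you fall back on describe the induced partition of $\N$, which forgets order. So independence of orientations across splits still needs an interval-level statement (as in Basdevant) or the paper's symmetry remark.
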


\begin{proof}
By \cite[Theorem~3.5 and  Proposition~4.3]{Basdevant},
the partition-valued process on $\N$ induced by $\I$ and
$(U_j)_{j\geq1}$ is a homogeneous fragmentation with zero erosion
and dislocation measure given by the image of $\rho$ under
$x\mapsto(\max\{x,1-x\},\min\{x,1-x\},0,\ldots)$.
This is exactly the measure identified in
\cite[equation~(4.12)]{AldousJansonIII}, so their representation
\cite[Section~4.2]{AldousJansonIII} gives the genealogy and lifetimes
of $\CTCS(n)$ after forgetting the left-to-right order.
Finally, by the symmetry of $\rho$,  
 two child intervals at each split are placed in either order with equal
probability, independently at different splits. This agrees with
the ordering convention in $\CTCS(n)$.
\end{proof}

Throughout the remainder of the paper, we use the coupling
described in Lemma~\ref{lem:finite-restriction}.

\subsection{The integrated splitting rate}
\label{sec:clock}
\label{sec:occupancy}  
For an interval $I\subseteq(0,1)$, let
\begin{equation}\label{eq:interval-occupancy}
  N_n(I):=\#\{j\in[n]:U_j\in I\}
\end{equation}
be the number of sampled labels it contains. For $x\in\I_t$, write
$\I_t(x)$ for the interval fragment containing $x$, and set
\begin{equation}\label{eq:sample-clade-size}
  N_n(t,x):=N_n(\I_t(x)).
\end{equation} 
Note that, almost surely, $N_n(t,U_r)$ is well-defined
simultaneously for all $t \ge 0$ and $r \ge 1$. Indeed, by
conservativity of $\I$, for each pair of integers $k,r \ge 1$,
$  \P(U_r \notin \mathcal{I}_k)
  \le \E [|\mathcal{I}_0 \setminus \mathcal{I}_k| ]
  = 0$. 
Taking a countable intersection over $k,r$, and using
the fact that $(\mathcal{I}_t)_{t \ge 0}$ is decreasing,
yields the claim.

By Lemma~\ref{lem:finite-restriction}, $N_n(t,U_r)$ is the
size of the clade containing label $r$ at time $t$. Hence, the time-height $D_n(r)$  of the
leaf $\{r\}$ introduced in Section \ref{sec:related-work} can be rewritten as 
\begin{equation}\label{eq:isolation-time}
  D_n(r)=\inf\{t\geq0:N_n(t,U_r)=1\} .
\end{equation} 
Recall that $\mathcal A_n(r)$ is the set of ancestor clades of leaf
$\{r\}$, excluding the leaf itself, and that $\ell_{\mathsf C}$ is
the lifetime of a clade $\mathsf C$. The rescaled lifetimes and heights
introduced in Section~\ref{sec:roadmap} are
\begin{equation}\label{eq:Q-leaf-definition}
  E_{\mathsf C}=h_{|\mathsf C|-1}\ell_{\mathsf C},
  \qquad
  \widetilde L_n(r)=\sum_{\mathsf C\in\mathcal A_n(r)}E_{\mathsf C},
  \qquad
  \widetilde L_n^*=\max_{r\in[n]}\widetilde L_n(r).
\end{equation} 

\begin{lemma}\label{lem:marked-graph-comparison}
Almost surely, for every $n\geq2$ and $r\in[n]$,
\begin{equation}\label{eq:height-integral}
  \widetilde L_n(r)
  =\int_0^{D_n(r)}h_{N_n(t,U_r)-1}\,\dif t.
\end{equation}
Moreover, under the coupling fixed above, as $n\to\infty$,
\begin{equation}\label{eq:uniform-marked-graph-comparison}
  \Delta_n:=\max_{r\leq n}|L_n(r)-\widetilde L_n(r)|
  =O((\log n)^{3/2})
  \qquad\text{almost surely}.
\end{equation}
In particular, we have  $  {|L_n^*-\widetilde L_n^*|} / {(\log n)^2} \to   0 $ alsmost surely as $n \to \infty$.
\end{lemma}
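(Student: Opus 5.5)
Two things need proof: the integral identity \eqref{eq:height-integral}, and the uniform bound \eqref{eq:uniform-marked-graph-comparison} on $|L_n(r)-\widetilde L_n(r)|$.

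\emph{The identity.} Fix $r$. By Lemma~\ref{lem:finite-restriction}, the ancestral clades $\mathsf C\in\mathcal A_n(r)$ occupy consecutive, disjoint time intervals $[b_{\mathsf C},b_{\mathsf C}+\ell_{\mathsf C})$. Their union is $[0,D_n(r))$, and on each such interval $N_n(t,U_r)=|\mathsf C|$. Splitting the integral along these intervals gives
\[
\int_0^{D_n(r)}h_{N_n(t,U_r)-1}\,\dif t=\sum_{\mathsf C\in\mathcal A_n(r)} h_{|\mathsf C|-1}\,\ell_{\mathsf C}=\sum_{\mathsf C\in\mathcal A_n(r)} E_{\mathsf C}.
\]
This holds simultaneously for all $n,r$ on the almost sure event where $N_n(t,U_r)$ is well defined for all $t$ and $r$.

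\emph{The uniform bound.} Conditionally on $\DTCS(n)$, the $E_{\mathsf C}$ are i.i.d.\ $\Exp(1)$. So $\widetilde L_n(r)-L_n(r)=\sum_{\mathsf C\in\mathcal A_n(r)}(E_{\mathsf C}-1)$ is a sum of $L_n(r)$ centred i.i.d.\ variables with exponential moments. I would proceed in three steps.

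\begin{enumerate}
\item Get a crude bound on the number of terms. Aldous--Pittel's upper bound, or a first-moment bound I would reprove if needed, gives $\P(L_n^*>K(\log n)^2)\le n^{-2}$ for a suitable constant $K$. The exponent $2$ makes the Borel--Cantelli step below summable. If only the high-probability statement is available, I would derive the tail through the exponential moment of the harmonic descent chain, via a union bound over the $n$ leaves.
\item Apply a conditional Bernstein/Chernoff bound for each fixed $n$ and $r$, on the event $L_n(r)\le K(\log n)^2$:
\[
\P\bigl(|\widetilde L_n(r)-L_n(r)|>A(\log n)^{3/2}\bigm|\DTCS(n)\bigr)\le 2\exp\bigl(-cA^2(\log n)^3/(K(\log n)^2)\bigr)=2n^{-cA^2/K}.
\]
This requires the deviation $x=A(\log n)^{3/2}$ to stay in the Gaussian regime $x\lesssim m=K(\log n)^2$, which it does.
\item Take a union bound over $r\le n$ and choose $A$ large. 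The failure probability is then at most $n\cdot n^{-cA^2/K}+n^{-2}$, which is summable in $n$. Borel--Cantelli gives $\Delta_n\le A(\log n)^{3/2}$ eventually, almost surely.
\end{enumerate}
The final claim of the lemma follows at once, since $|L_n^*-\widetilde L_n^*|\le\Delta_n$.

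\emph{Main obstacle.} The obstacle is step 1: getting a summable tail for $L_n^*$ at scale $(\log n)^2$, rather than only a high-probability bound. I would try the exponential-moment recursion for a single leaf's height. With $\phi_m(\lambda)=\E e^{\lambda L_m(1)}$, the recursion is $\phi_m=e^{\lambda}\sum_i q(m,i)\frac{i}{m}\phi_i$. Taking $\lambda=a/\log n$ should give $\phi_n\le e^{Ca\log n}$. Markov's inequality then bounds $\P(L_n(1)>K(\log n)^2)$ by $n^{Ca-aK}$, and a union bound over leaves finishes.

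A fallback avoids the height tail altogether by working with time. Using the identity together with the occupancy bound $h_{N-1}\le \log n+1$, one has $\widetilde L_n(r)\le(\log n+1)D_n(r)$, and Lemma~\ref{lem:isolation-time} gives $D_n^*\le 4\log n$ eventually. This yields $\widetilde L_n^*=O((\log n)^2)$ almost surely. One then runs the concentration argument in the reverse direction: conditionally on the tree, a sum of $m$ exponentials being $O((\log n)^2)$ forces $m=O((\log n)^2)$ outside an event of probability $n^{-3}$. A union bound over leaves controls $L_n^*$ the same way.
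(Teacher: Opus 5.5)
Your proof is correct. Its overall architecture matches the paper's: the identity by splitting the integral over the ancestral lifetimes, a conditional Bernstein bound on the event $L_n^*\le K(\log n)^2$, then a union bound and Borel--Cantelli. The only real difference is how you get a summable tail for $L_n^*$.

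\emph{Your fallback is the paper's route.} The paper uses $\widetilde L_n^*\le h_{n-1}D_n^*$ from the identity, and $\P(D_n^*>4\log n)\lesssim n^{-2}$ from Lemma~\ref{lem:isolation-time}. The Gamma lower tail $\P(\Gamma_k\le k/2)\le e^{-ck}$, with a union bound over leaves, then gives $\P(L_n^*>32(\log n)^2,\ D_n^*\le4\log n)\le ne^{-c(\log n)^2}$. The paper states this as a probability bound, not an ``eventually'' statement, so that it feeds directly into your step~3.

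\emph{Your primary route is genuinely different, and it also works once the recursion is fixed.} Since $\sum_i q(m,i)\,i/m=\tfrac12$, your recursion drops the case where the tagged leaf goes to the right child. The correct kernel is $\P(|\mathsf C'|=i\mid|\mathsf C|=m)=2q(m,i)\,i/m=1/(h_{m-1}(m-i))$. With it,
\[
\E\bigl[\,|\mathsf C'|\bigm||\mathsf C|=m\bigr]=m-\frac{m-1}{h_{m-1}}\le m\,e^{-1/(2h_{m-1})}.
\]
So for $\lambda=1/(2h_{n-1})$, the process $e^{\lambda k}|\mathsf C_k|$ stopped at absorption is a supermartingale; absorption occurs within $n-1$ steps. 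This gives $\phi_n(\lambda)\le n$, hence $\P(L_n(1)>K(\log n)^2)\le n^{1-K/2+o(1)}$. Any $K>8$ then suffices after the union bound over leaves.

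\emph{What each route buys.} Yours stays entirely inside the discrete tree and needs neither the fragmentation coupling nor the integral identity for this step. The paper's avoids any computation with the descent chain, and reuses Lemma~\ref{lem:isolation-time} and the identity, which it needs anyway for Proposition~\ref{prop:Q-upper}.
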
 

Before proving Lemma \ref{lem:marked-graph-comparison}, we first state a useful lemma controlling $  D_n^* = \max_{r \in [n]}D_n(r)$.

\begin{lemma}\label{lem:isolation-time}
For every $n\geq2$ and $t\geq0$,
\begin{equation}\label{eq:isolation-tail}
  \P(D_n^*>2\log n+t)\leq\frac12e^{-t}.
\end{equation} 
\end{lemma}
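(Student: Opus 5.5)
Since $D_n^*=\max_r D_n(r)$ is the time at which all $n$ labels are separated, a union bound over pairs is the natural approach:
\[
\{D_n^*>s\}=\bigcup_{1\le i<j\le n}\{U_i,U_j \text{ lie in the same fragment of } \I_s\},
\]
up to null events. We then compute the probability that two fixed labels are still together at time $s$. By Lemma~\ref{lem:finite-restriction}, or directly from the fragmentation, the clade containing both $i$ and $j$ splits at rate equal to the $\rho$-mass of dislocations that separate two points. Given two uniform points in a fragment, a dislocation with parameter $x$ keeps them together with probability $x^2+(1-x)^2$. Hence the separation rate is
\[
\int_0^1 \bigl(1-x^2-(1-x)^2\bigr)\frac{\dif x}{2x(1-x)}=\int_0^1 \frac{2x(1-x)}{2x(1-x)}\dif x=1.
\]
By homogeneity and the branching property (the rescaled positions of $U_i,U_j$ inside their common fragment remain i.i.d.\ uniform given the past), the separation time of a fixed pair is exactly $\Exp(1)$. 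Equivalently, in $\CTCS(2)$ the root clade has rate $h_1=1$.

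For a rigorous version, I would use the paintbox partition process of $\{i,j\}$. This is a homogeneous fragmentation restricted to $\{1,2\}$, so the block $\{i,j\}$ survives time $s$ with probability $e^{-\Phi s}$, where $\Phi=\int(1-\sum_k s_k^2)\,\nu(\dif s)=1$. This is the standard formula $\P(\{1,2\}\text{ unsplit at } s)=\E\sum_k |\I_s^{(k)}|^2=e^{-\kappa(2)s}$ in Bertoin's notation, since $\kappa(2)=\psi(2)+\gamma=1$ here. This matches the cumulant $\kappa(\theta)$ defined in \eqref{eq:kappa-definition} with $\theta=2$, which is reassuring.

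The union bound then gives
\[
\P(D_n^*>2\log n+t)\le \binom n2 e^{-2\log n-t}=\frac{n(n-1)}{2n^2}e^{-t}\le \frac12 e^{-t}.
\]

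The only step needing care is justifying that the pair's coalescent indicator decays exactly as $e^{-s}$. I would do this either by citing $\E\sum_k|\I_s^{(k)}|^2=\exp(-s\,\kappa(2))$, the standard moment formula for homogeneous fragmentations with zero erosion, together with $\kappa(2)=\int(1-x^2-(1-x)^2)\rho(\dif x)=1$, or equivalently by reading it off $\CTCS(2)$ via Lemma~\ref{lem:finite-restriction} with exchangeability of labels. The rest is arithmetic.
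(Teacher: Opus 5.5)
Your proof is correct and takes the same approach as the paper: a union bound over the $\binom n2$ pairs, with each pair separated at an $\Exp(1)$ time, followed by the same arithmetic. The paper gets the $\Exp(1)$ law directly from $\CTCS(2)$ (root rate $h_1=1$) via Lemma~\ref{lem:finite-restriction}, which is the alternative you mention. Your computation $\kappa(2)=\int_0^1\bigl(1-x^2-(1-x)^2\bigr)\rho(\dif x)=1$ is an equally valid route to the same fact.
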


\begin{proof}[Proof of Lemma~\ref{lem:marked-graph-comparison}]
For each $r\in[n]$, the ancestral clades partition the time interval
$[0,D_n(r))$ into their lifetimes. On the lifetime interval of
$\mathsf C\in\mathcal A_n(r)$, we have
$N_n(t,U_r)=|\mathsf C|$. Hence
\[
  \int_0^{D_n(r)}h_{N_n(t,U_r)-1}\,\dif t
  =\sum_{\mathsf C\in\mathcal A_n(r)}
     h_{|\mathsf C|-1}\ell_{\mathsf C}
  =\widetilde L_n(r),
\]
which proves \eqref{eq:height-integral}.

Conditional on $\DTCS(n)$, the marks $E_{\mathsf C}$ of the
internal clades are independent $\Exp(1)$ variables. Thus, let
$\Gamma_m:=\sum_{j=1}^m E_j$, where $(E_j)_{j\geq1}$ are i.i.d. $\Exp(1)$ variables, independent of
$\DTCS(n)$. We therefore get, for each $r\in[n]$,
\[
  \widetilde L_n(r)\overset{\mathrm {law}}=\Gamma_{L_n(r)}
  \qquad\text{conditionally on }\DTCS(n).
\]
 Chernoff's inequality gives an absolute
constant $c>0$ such that for all $k \ge 1$ and $x>0$
\begin{equation}
    \P(\Gamma_k\leq k/2)
     \leq e^{- c k} \ \text{ and } \
    \P(|\Gamma_k-k|\geq x)
    \leq 2\exp \Bigl\{ -c\frac{x^2}{k+x} \Bigr\} . 
    \label{eq-deviation-RW}  
\end{equation} 
A union bound  yields
\begin{equation} 
    \P(\Delta_n\geq x\mid\DTCS(n))
    \leq 2\sum_{r=1}^n
         \exp \Bigl\{  
          -c\frac{x^2}{L_n(r)+x}\Bigr\}  
    \leq 2n\exp \Bigl\{ -c\frac{x^2}{L_n^*+x}\Bigr\}  .
   \label{eq-deviation-Delta}
\end{equation}

 Put
$y_n=\log n$. We claim that 
\begin{equation}\label{eq:summable-height-bound}
  \P(L_n^*> 32 y_n^2 )
  \lesssim \frac1{ n^2} .
\end{equation} 
Applying \eqref{eq-deviation-Delta} on the event 
$L_n^*\leq 32 y_n^2$    gives
\begin{equation}\label{eq:summable-path-concentration}
  \P\bigl(\Delta_n> A y_n^{3/2},\ L_n^*\leq 32 y_n^2\bigr)
  \leq2n\exp \Bigl\{  -c\frac{A^2y_n^3}{32 y_n^2+Ay_n^{3/2}} \Bigr\}.
\end{equation}
Choose $A$ large enough that $cA^2/33>3$. Then for all  large
$n$, the right-hand side of \eqref{eq:summable-path-concentration} is at
most $2n^{-2}$. Together with \eqref{eq:summable-height-bound}, this
shows that
\[
  \sum_{n\geq2}
  \P\bigl(\Delta_n>A(\log n)^{3/2}\bigr)<\infty.
\]
Applying the Borel--Cantelli Lemma proves
\eqref{eq:uniform-marked-graph-comparison}.  

It remains to show \eqref{eq:summable-height-bound}.  Using  Lemma \ref{lem:isolation-time} with $t=2y_n$ we get $\P(D_n^* > 4 y_n) \lesssim n^{-2}$. Thus \eqref{eq:summable-height-bound} follows once we prove 
\begin{equation}\label{eq:summable-height-bound-2}
  \P(L_n^*> 32 y_n^2, D_n^* \le 4y_n)
  \leq  n e^{-c y_n^2} .
\end{equation}    Indeed, since $N_n(t,U_r)\leq n$, the integral   \eqref{eq:height-integral} implies 
$
  \widetilde L_n^*\leq h_{n-1}D_n^*$. For all   large $n$, since $h_{n-1} \le 2 y_n$, 
 $  \widetilde L_n^*$ is at most $8y_n^2$ on $\{D_n^*\leq 4 y_n\}$.  
 If
$L_n^*> 32y_n^2 $ and $\widetilde L_n^*\leq8 y_n^2$, then there must exist $r \in [n]$ with $L_n(r)= L_n^*$ such that $\widetilde L_n(r)\leq8y_n^2<L_n(r)/2$. Conditioning on $\DTCS(n)$ and applying
\eqref{eq-deviation-RW}   therefore gives
\[
  \P \bigl( L_n^*> 32 y_n^2, D_n^* \le 4y_n \mid\DTCS(n)  \bigr) 
  \leq
  \sum_{ r\in[n] }
  e^{-c L_n(r)} \ind{L_n(r)>32y_n^2  }
  \leq n e^{-c  y_n^2}.
\]
This completes the proof. 
\end{proof}

\begin{proof}[Proof of Lemma \ref{lem:isolation-time}]
By Lemma~\ref{lem:finite-restriction}, the restriction to any two distinct
labels has the law of $\CTCS(2)$. Its unique internal clade has lifetime
$\Exp(h_1)=\Exp(1)$; see also
\cite[Corollary~2.6]{AldousJansonIII}. Therefore, for $r\ne k$, we have 
\begin{equation}\label{eq:pair-survival}
  \P\bigl(\I_s(U_r)=\I_s(U_k)\bigr)=e^{-s},
  \quad s\geq0.
\end{equation}
At time $s$, some label is not yet isolated if and only if some pair of
labels lies in the same fragment. By a union bound, we get $
  \P(D_n^*>s)\leq\binom n2e^{-s}$. 
Substituting $s=2\log n+t$   gives
\eqref{eq:isolation-tail}.
\end{proof}

 \subsection{Label counts versus fragment masses}
To estimate the integrand in \eqref{eq:height-integral}, we compare the binomial count
$N_n(I)$ defined in \eqref{eq:interval-occupancy} with its mean $n|I|$. We need bounds that hold simultaneously for all
intervals $I \subset (0,1)$, so that it controls all fragments in $\mathcal{I}_t$ at all time $t$.

\begin{lemma}\label{lem:all-interval-occupancy}
There are absolute constants $C_0,c_0>0$, and events
$G_n ,n\geq2$, depending only on $U_1,\ldots,U_n$, such that
$ \ind{G_n}\to1$ almost surely  and, on $G_n$, every interval
$I\subset(0,1)$ satisfies
\begin{align}
  N_n(I)&\leq C_0 ( n|I|+\log n),
  \label{eq:occupancy-upper}\\
  N_n(I)&\geq c_0n|I|
  \qquad\text{if }n|I|\geq C_0\log n.
  \label{eq:occupancy-lower}
\end{align}
\end{lemma}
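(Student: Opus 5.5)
The plan is to discretize. We reduce the uncountable family of intervals to a finite family of dyadic-type intervals, apply binomial Chernoff bounds to each, and take a union bound. The resulting failure probability will be summable in $n$, so Borel--Cantelli gives $\ind{G_n}\to1$ almost surely. Fix $n\ge2$ and let $K:=\lceil \log_2 n\rceil$. For $0\le k\le K$, let $\mathcal D_k$ be the dyadic intervals $[i2^{-k},(i+1)2^{-k})$. Let $G_n$ be the event that every $D\in\bigcup_{k\le K}\mathcal D_k$ satisfies
\[
  N_n(D)\le 2n|D|+B\log n,
\]
and that every $D$ with $n|D|\ge B'\log n$ also satisfies $N_n(D)\ge \tfrac12 n|D|$. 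Here $B,B'$ are absolute constants. The total number of such intervals is at most $2^{K+2}\le 8n$.

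For each fixed $D$, $N_n(D)\sim\mathrm{Bin}(n,|D|)$. The standard Chernoff bound gives
\[
  \P(N_n(D)\ge 2n|D|+x)\le e^{-x/3}.
\]
This holds for all $x\ge0$ (via $\P(N\ge a)\le e^{-a}\,\E e^{N}$ with the factor $e^{(e-1)n|D|}$ absorbed, or by the usual $(1+\delta)$ form). It also gives
\[
  \P(N_n(D)\le \tfrac12 n|D|)\le e^{-n|D|/8}.
\]
Taking $x=B\log n$ and $n|D|\ge B'\log n$, with $B=B'=24$, each probability is at most $n^{-3}$. The union over at most $8n$ intervals gives $\P(G_n^c)\le 16n^{-2}$, which is summable. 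Borel--Cantelli therefore yields $\ind{G_n}\to1$ almost surely, and $G_n$ depends only on $U_1,\dots,U_n$.

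It remains to transfer the bounds to arbitrary intervals $I\subset(0,1)$ on $G_n$. For the upper bound \eqref{eq:occupancy-upper}, suppose first $|I|\ge 2^{-K}$. Choose $k\le K$ with $2^{-k}\le |I|<2^{-k+1}$. Then $I$ is covered by at most $3$ intervals of $\mathcal D_k$, so
\[
  N_n(I)\le 3\bigl(2n2^{-k}+B\log n\bigr)\le 6n|I|+3B\log n.
\]
If $|I|<2^{-K}\le 1/n$, then $I$ meets at most two intervals of $\mathcal D_K$, each of length at most $1/n$, so $N_n(I)\le 2(2+B\log n)$. In both cases $N_n(I)\le C_0(n|I|+\log n)$.

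For the lower bound \eqref{eq:occupancy-lower}, suppose $n|I|\ge C_0\log n$. Take $k$ minimal with $2^{-k}\le |I|/4$. Then $I$ contains a dyadic interval $D\in\mathcal D_k$ with $|D|\ge |I|/8$. Moreover $k\le K$ holds automatically, since $|I|\ge C_0\log n/n$. We have $n|D|\ge C_0\log n/8\ge B'\log n$ once $C_0\ge 8B'$. Hence
\[
  N_n(I)\ge N_n(D)\ge \tfrac12 n|D|\ge \tfrac1{16}n|I|,
\]
so we may take $c_0=1/16$.

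The main obstacle is only bookkeeping: choosing the scale $k$ so that a bounded number of dyadic intervals covers, or is contained in, $I$ while $k$ stays at most $K$. This is what keeps the union bound over $O(n)$ events. Nothing from the fragmentation structure is needed.
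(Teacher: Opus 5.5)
Your proposal is correct and follows essentially the same route as the paper: Chernoff bounds on a dyadic grid, a union bound with summable failure probability and Borel--Cantelli, then covering $I$ by boundedly many dyadic intervals for the upper bound and inscribing one in $I$ for the lower bound. The differences are only bookkeeping. You stop the grid at depth $\lceil\log_2 n\rceil$ instead of $\lceil 3\log_2 n\rceil$, so the union bound runs over $O(n)$ rather than $O(n^3)$ intervals. Your half-open dyadic intervals also make the paper's exclusion of dyadic rationals unnecessary.
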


\begin{proof}
We first control the counts on a dyadic grid and then pass to
arbitrary intervals. Put $y_n=\log n$  and  
$k_n=\lceil3\log_2 n\rceil$.  
For every $0\leq k\leq k_n$, let
$\mathcal D_k$ be the dyadic partition of $[0,1)$ into half-open
intervals of length $2^{-k}$. We work on the probability-one event that no $U_i$ is a
dyadic rational.

For each $D\in\mathcal D_k$, $N_n(D)$ is
binomial with mean $ n|D|=n 2^{-k}$. The   Chernoff bounds yield
an absolute constant $c>0$ such that, for $A\geq1$,
\begin{align}
   \P(N_n(D)<n|D|/2) \leq e^{-n|D|/8} \  \text{ and } \ \P(N_n(D)>2n|D|+Ay_n) \leq e^{-c(n|D|+Ay_n)}.
  \label{eq:dyadic-lower-tail}
\end{align}
 Fix $A$ so
large that $cA>5$ and $A/8>5$. Let $G_n$ denote the event 
\begin{equation}  
 \bigl\{ \   
  \forall\, D \in \cup_{k=0}^{k_n} \mathcal{D}_k \,, \    N_n(D) \leq2n|D|+Ay_n, \text{ and }
    N_n(D) \geq\tfrac12n|D|
    \text{ if } n|D|\geq Ay_n  \bigr\}.
\end{equation}
Since $\sum_{k=0}^{k_n} |\mathcal{D}_k| =\sum_{k=0}^{k_n}2^k \le 4n^3$,
by employing \eqref{eq:dyadic-lower-tail} and the union bound, we get 
$\P( G_n^c ) \lesssim n^3 e^{-5 y_n}  =n^{-2} $. The Borel--Cantelli Lemma implies $ \ind{G_n}\to1$ a.s. as required.

It remains to verify that on $G_n$, the inequalities \eqref{eq:occupancy-upper} and \eqref{eq:occupancy-lower} hold.
For the upper bound, if $|I|\geq2^{-k_n}$, choose $k\leq k_n$ with
$2^{-k-1}<|I|\leq2^{-k}$. Since $|I|\leq2^{-k}$, $I$ intersects the interiors of at most
two intervals in $\mathcal D_k$, so
\[
  N_n(I)\leq8n|I|+2Ay_n.
\]
If $|I|<2^{-k_n} \asymp n^{-3}$, use at most two intervals in $\mathcal D_{k_n}$ to cover $I$. This gives 
$N_n(I)\leq4n2^{-k_n}+2Ay_n$.
Increasing $C_0$ gives \eqref{eq:occupancy-upper}.

For the lower bound, suppose that $n|I|\geq C_0y_n$ and choose $k$ such that $|I|/4<2^{-k}\leq|I|/2$. The interval $I$
contains one  interval $D \in \mathcal{D}_k$ up to dyadic endpoints. 
Taking $C_0\geq4A$ ensures that $n|D| \ge n|I|/4\geq Ay_n$. Thus for all large $n$,  since $k \le k_n$,  the definition of $G_n$ implies 
\[
  N_n(I)\geq N_n(D)\geq\tfrac12n|D|
  \geq\tfrac18n|I| \quad \text{ on } G_n.
\]
Taking $c_0=1/8$ proves \eqref{eq:occupancy-lower}.  
\end{proof}

  Since $h_{n-1}=\log n+O(1)$,  Lemma \ref{lem:all-interval-occupancy} gives
the following bounds on the splitting rates. Write $\log_+(x):= \log (x \vee 1) $ for all $x >0$.

\begin{corollary} 
\label{cor:harmonic-logmass}
There are absolute constants $C_1,C_2<\infty$ such that, for all
 large $n$, on $G_n$ defined in Lemma \ref{lem:all-interval-occupancy}, simultaneously for all
$r\in[n]$ and $0\leq t<D_n(r)$,
\begin{align}
  h_{N_n(t,U_r)-1}
  & \leq \log_+(n|\I_t(U_r)|) +C_1\log\log n \  , \quad  \text{ and }\label{eq:harmonic-upper} \\
  h_{N_n(t,U_r)-1}&\geq\log(n|\I_t(U_r)|)-C_2 \  \text{ whenever } \quad n|\I_t(U_r)|\geq(\log n)^2.\label{eq:harmonic-lower}
\end{align}
\end{corollary}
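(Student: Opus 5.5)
Both bounds will follow by applying Lemma~\ref{lem:all-interval-occupancy} to the single interval $I=\I_t(U_r)$, together with the elementary estimate $\log m\le h_m\le\log m+1$ for $m\geq1$. First I check which intervals are involved. For $0\le t<D_n(r)$, the clade of $r$ at time $t$ is an internal clade, so $N:=N_n(t,U_r)\ge 2$ and $h_{N-1}\geq h_1>0$. Since $I$ is an interval component of $\I_t$, the occupancy bounds on $G_n$ apply to it. Because $G_n$ controls every interval at once, these bounds hold simultaneously for all $r$ and $t$, with no further union bound needed.

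\emph{Upper bound.} On $G_n$, \eqref{eq:occupancy-upper} gives $N\le C_0(n|I|+\log n)$, so
\[
h_{N-1}\le \log N+1\le \log C_0+1+\log\bigl(n|I|+\log n\bigr).
\]
I split into two cases. If $n|I|\le\log n$, the last term is at most $\log(2\log n)=\log\log n+\log 2$. If $n|I|>\log n$, it is at most $\log(2n|I|)=\log_+(n|I|)+\log 2$. In both cases the constants are absorbed into $C_1\log\log n$ once $n$ is large, so that $\log\log n\ge1$. This gives \eqref{eq:harmonic-upper}.

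\emph{Lower bound.} Assume $n|I|\ge(\log n)^2$. For large $n$ this is at least $C_0\log n$, so \eqref{eq:occupancy-lower} gives $N\ge c_0 n|I|$. Using $h_{N-1}\ge\log N-1$, which follows from $h_{m}\ge\log(m+1)$, we get
\[
h_{N-1}\ge \log(n|I|)+\log c_0-1.
\]
Setting $C_2=1-\log c_0$ gives \eqref{eq:harmonic-lower}.

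The only point that needs care is that $I=\I_t(U_r)$ is well defined for all $t$ and $r$ at once. This holds almost surely by the remark following \eqref{eq:sample-clade-size}. I do not expect any real obstacle: the argument is a direct substitution of the occupancy bounds into harmonic-number estimates.
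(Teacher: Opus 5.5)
Your proposal is correct and follows essentially the same route as the paper. You apply the two occupancy bounds of Lemma~\ref{lem:all-interval-occupancy} to the single interval $I=\I_t(U_r)$ and combine them with $\log m\le h_{m-1}\le 1+\log m$; your case split ($n|I|\le\log n$ versus $n|I|>\log n$) simply spells out the absorption step the paper does in one line.

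The only difference is cosmetic. Your own inequality $h_m\ge\log(m+1)$ already gives $h_{N-1}\ge\log N$, so the extra $-1$ is unnecessary, and the paper takes $C_2=-\log c_0$.
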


\begin{proof}
Write $I=\I_t(U_r)$ in this proof. On $G_n$, the upper occupancy bound \eqref{eq:occupancy-upper} gives
\[
  h_{N_n(I)-1}
  \leq1+\log C_0+\log(n|I|+\log n)
  \leq\log_+(n|I|)+C_1\log\log n
\]
for all   large $n$. If $n|I|\geq(\log n)^2$,  the lower occupancy bound \eqref{eq:occupancy-lower} yields
\[
  h_{N_n(I)-1}\geq\log N_n(I)
  \geq\log(n|I|)+\log c_0.
\]
Taking $C_2=-\log c_0$ completes the proof.
\end{proof}

\subsection{The largest fragment}
\label{sec:speed}

Let $\I_t^{(1)}$ be a largest interval component of $\I_t$, choosing
the leftmost in case of ties, and set $M(t):=\log (1/|\I_t^{(1)}|)$.  Bertoin~\cite[equation~(9)]{BertoinAsymptotic} established
the first-order growth of $M(t)$, while
Kyprianou, Lane, and M{\"o}rters~\cite[Theorem~2.2]{KyprianouLaneMorters}
identified its logarithmic correction.
In the following Lemma we need the   series representation of the digamma function 
\[  \psi(\theta)  =  \frac{\dif}{\dif \theta} \log ( \Gamma(\theta)) =- \gamma + \sum_{k=0}^{\infty} \Bigl(   \frac1{k+1}-\frac1{k+\theta}\Bigr) \ ,  \quad \theta > 1. \]
This follows from the Weierstrass product for the gamma function.

\begin{lemma} \label{lem:largest-fragment-speed}
 The equation $
  \theta\psi'(\theta)=\kappa(\theta)$ 
has a unique solution $\theta_*$ on $(1,\infty)$.  Moreover,  
\begin{equation}\label{eq:largest-fragment-speed}
 \frac{M(t)}t \,\xrightarrow[t \to \infty]{\mathrm{a.s.},\, L^1 } \     v_*:=\kappa'(\theta_*)
      =\max_{\theta>1}\frac{\kappa(\theta)}{\theta} >0.
\end{equation} 
\end{lemma}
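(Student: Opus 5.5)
We split the lemma into two parts: an analytic statement about $\kappa$, and a probabilistic statement about the largest fragment, which we will read off from Bertoin's theorem once the Laplace exponent of the fragmentation is identified.

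\emph{Identifying the Laplace exponent.} For the homogeneous fragmentation with zero erosion and dislocation measure $\rho$, the tagged-fragment exponent is
\[
  \Phi(q)=\int_0^1\bigl(1-x^{q+1}-(1-x)^{q+1}\bigr)\,\frac{\dif x}{2x(1-x)} ,
\]
so that $\E\sum_{J}|J|^{q+1}=e^{-t\Phi(q)}$, the sum running over the components $J$ of $\I_t$. By symmetry, $\Phi(q)=\int_0^1\frac{1-x^{q}}{1-x}\,\dif x$. Expanding $\frac{1-x^q}{1-x}=\sum_{k\ge0}(x^k-x^{k+q})$ and integrating termwise gives $\sum_{k\ge0}\bigl(\frac1{k+1}-\frac1{k+q+1}\bigr)=\psi(q+1)+\gamma$. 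Hence $\Phi(\theta-1)=\kappa(\theta)$ for $\theta>1$. As a check, $\kappa(2)=1$ matches the pair-separation rate in \eqref{eq:pair-survival}.

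\emph{Analytic part.} We show that $f(\theta):=\theta\psi'(\theta)-\kappa(\theta)$ has a unique zero on $(1,\infty)$. First, $f'(\theta)=\theta\psi''(\theta)<0$, since $\psi''(\theta)=-2\sum_k(k+\theta)^{-3}<0$, so $f$ is strictly decreasing. At $\theta=1$ we have $f(1)=\psi'(1)-0=\pi^2/6>0$. As $\theta\to\infty$, $\theta\psi'(\theta)\to1$ while $\kappa(\theta)\sim\log\theta\to\infty$, so $f\to-\infty$. This gives the unique zero $\theta_*$.

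Next, $\frac{\dif}{\dif\theta}\frac{\kappa(\theta)}{\theta}=-f(\theta)/\theta^2$. So $\kappa(\theta)/\theta$ increases on $(1,\theta_*)$ and decreases afterwards, and its maximum over $\theta>1$ is attained at $\theta_*$. There it equals $\kappa(\theta_*)/\theta_*=\psi'(\theta_*)=\kappa'(\theta_*)=v_*>0$. This also yields $C_{\mathrm{ht}}=1/(2v_*)$, which is the identity stated in Theorem~\ref{thm:main}.

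\emph{Almost sure convergence.} Bertoin~\cite[equation~(9)]{BertoinAsymptotic} gives, for a conservative homogeneous fragmentation whose Laplace exponent $\Phi$ is finite near $p=0$, that
\[
  -\frac1t\log|\I^{(1)}_t|\to \Phi'(\bar p)=\frac{\Phi(\bar p)}{1+\bar p}\quad\text{a.s.},
\]
where $\bar p$ solves $(1+p)\Phi'(p)=\Phi(p)$. Substituting $\theta=1+p$ turns this into $\theta\kappa'(\theta)=\kappa(\theta)$ and identifies the limit as $v_*$. Finiteness of $\Phi$ on $(-1,\infty)$ holds because $\rho$ integrates $x^{q}$ near $0$ for $q>-1$ and $1-x^{q+1}-(1-x)^{q+1}=O(x)$ there. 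The interval-valued process induces the same ranked mass process as the partition fragmentation of Lemma~\ref{lem:finite-restriction}, so the result applies to $M(t)$.

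\emph{$L^1$ convergence.} This is the only part not taken directly from the literature, and it reduces to uniform integrability of $M(t)/t$.

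The bound $M(t)\ge0$ is trivial. For the upper tail, fix $\theta>1$ and use $|\I^{(1)}_t|^{\theta}\le\sum_J|J|^{\theta}$. Then for $s>0$,
\[
  \P(M(t)>st)=\P\bigl(|\I_t^{(1)}|^\theta<e^{-\theta st}\bigr)
\]
is not what this inequality controls, so we instead bound $|\I^{(1)}_t|$ from below. Sum over the fragments in an $L^q$ sense with $q=\theta-1\in(-1,0)$: since $|J|\le|\I_t^{(1)}|$ and $q+1<1$,
\[
  \sum_J|J|^{q+1}\le|\I_t^{(1)}|^{q}\sum_J|J|=|\I^{(1)}_t|^{q}.
\]
Hence $e^{-qM(t)}\ge\sum_J|J|^{q+1}$. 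Since $-q>0$, this only bounds $M$ from below again, which is the wrong direction.

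The fix is a lineage construction. Follow the larger child at every dislocation; this gives a fragment $J_t$ with $M(t)\le\log(1/|J_t|)$. Along this lineage $\log(1/|J_t|)$ is a compound Poisson subordinator with Lévy measure the image of $\rho$ restricted to $\{x\ge1/2\}$ (a finite measure) under $x\mapsto-\log x$. Its jumps are bounded by $\log2$, so it has all exponential moments and $\E\,e^{\lambda\log(1/|J_t|)}=e^{c_\lambda t}$ for some finite $c_\lambda$. It follows that $\sup_{t\ge1}\E[(M(t)/t)^2]<\infty$. Uniform integrability together with the almost sure limit then gives $L^1$ convergence.

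I expect the main obstacle to be matching Bertoin's hypotheses and normalisation precisely: his $\Phi$ versus our $\kappa$, and the shift $\theta=1+p$. The $L^1$ upgrade then follows routinely from the larger-child subordinator bound.
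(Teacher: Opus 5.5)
Your identification of the Laplace exponent and your derivation of the almost sure limit follow the paper. You identify $\Phi(p)=\int_0^1\frac{1-x^p}{1-x}\,\dif x=\kappa(p+1)$ and read off the limit from Bertoin's equation (9) with $\theta=1+p$. For the analytic part you give a self-contained monotonicity argument where the paper cites Bertoin's Lemma 1. That works, with two small slips:
\begin{itemize}
\item $\frac{\dif}{\dif\theta}\frac{\kappa(\theta)}{\theta}=\frac{\theta\kappa'(\theta)-\kappa(\theta)}{\theta^2}=+f(\theta)/\theta^2$, not $-f(\theta)/\theta^2$. The monotonicity you then state is the one the correct sign gives.
\item For $q\in(-1,0)$ the integrand $1-x^{q+1}-(1-x)^{q+1}$ is $O(x^{q+1})$ near $0$, not $O(x)$. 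Finiteness of $\Phi$ on $(-1,\infty)$ still holds.
\end{itemize}

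Your $L^1$ step departs from the paper, and as written it asserts something false. The measure $\rho$ restricted to $\{x\ge 1/2\}$ is not finite: its density $\frac{1}{2x(1-x)}$ blows up at $x=1$. So the larger-child lineage undergoes infinitely many tiny dislocations in every time interval. Hence $\log(1/|J_t|)$ is not compound Poisson, and its L\'evy measure is the image $\Pi$ of all of $\rho$ under $x\mapsto-\log\max\{x,1-x\}$.

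The bound you want still holds. $\Pi$ is supported on $(0,\log 2]$ and $\int y\,\Pi(\dif y)=\int_0^1-\log\max\{x,1-x\}\,\rho(\dif x)<\infty$, so $\int(e^{\lambda y}-1)\,\Pi(\dif y)<\infty$ for every $\lambda$. However, with infinite activity you would also have to justify that ``follow the larger child at every dislocation'' defines a nested lineage and that it is a L\'evy process. That is not immediate.

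None of this is needed. The paper dominates $M(t)$ by the tagged fragment $X(t)=-\log|\I_t(U_1)|$, which you already introduced when computing $\Phi$. Since $|\I_t(U_1)|\le|\I_t^{(1)}|$, we have $0\le M(t)\le X(t)$. Moreover $X$ is a bona fide subordinator with $\E e^{-pX(t)}=e^{-t\kappa(p+1)}$, so $\E[(X(t)/t)^2]=\kappa'(1)^2-\kappa''(1)/t$, which is bounded for $t\ge1$.

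Your abandoned first attempt also works once the inequality is turned around. For $q\in(-1,0)$ and $|J|\le|\I_t^{(1)}|$ one has $|J|^q\ge|\I_t^{(1)}|^q$. Hence $e^{-qM(t)}\le\sum_J|J|^{q+1}$ and
\[
\E\, e^{|q|M(t)}\le e^{-t\Phi(q)}, \qquad \Phi(q)\in(-\infty,0).
\]
A Chernoff bound then gives uniform integrability of $M(t)/t$ directly.
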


\begin{proof} 
By \cite[equations~(2) and~(4)]{BertoinAsymptotic}, the negative logarithm of the tagged-fragment mass
$X(t):=-\log|\I_t(U_1)|$ is a subordinator with Laplace exponent
\[
  \begin{aligned}
     \Phi(p)&= \int_0^1\frac{1-x^p}{1-x}\,\dif x
    =\sum_{k=0}^{\infty}\int_0^1(x^k-x^{k+p})\,\dif x\\
    &=\sum_{k=0}^{\infty} \Bigl(   \frac1{k+1}-\frac1{k+p+1}\Bigr) 
     =\psi(p+1)+\gamma=\kappa(p+1),
  \end{aligned}
\] 
Above, we expanded $(1-x)^{-1}$ as a geometric series
and used Fubini's theorem. Then, \cite[Lemma 1]{BertoinAsymptotic} gives the existence and uniqueness of $\theta_*$, and also yields the identity $\kappa'(\theta_*)
      =\max_{\theta>1}\frac{\kappa(\theta)}{\theta} .$ Moreover,  \cite[equation~(9)]{BertoinAsymptotic} shows the almost-sure
convergence in \eqref{eq:largest-fragment-speed}, since 
our fragmentation process is homogeneous with zero erosion, and its nonzero
conservative dislocation measure satisfies
$\int_0^1\min\{x,1-x\}\,\rho(\dif x)=\log2<\infty$. 

To prove the $L^1$ convergence, note that $0\leq M(t)\leq X(t)$.
Differentiating $\E[e^{-pX(t)}]=e^{-t\kappa(p+1)}$ twice at $p=0+$
and using $\kappa(1)=0$ gives, for $t>0$,
\[
  \E \bigl[  (M(t)/t)^2 \bigr]
  \leq\E \bigl[  (X(t)/t)^2 \bigr]
  =\kappa'(1)^2-\frac{\kappa''(1)}t<\infty.
\]
This completes the proof.
\end{proof}

\section{Proof of the main theorem}
\label{sec:proof-main}

\subsection{The upper bound}
\label{sec:upper-bound} 
The idea is simply to use the size of the largest fragment at each time to bound the splitting rates uniformly over all sampled lineages.

\begin{proposition}\label{prop:Q-upper}
Almost surely,
\begin{equation}\label{eq:Q-upper}
  \limsup_{n\to\infty}\frac{\widetilde L_n^*}{(\log n)^2}
  \leq\frac1{2v_*}.
\end{equation}
\end{proposition}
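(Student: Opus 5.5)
We follow the roadmap of Section~\ref{sec:roadmap}. Work on the events $G_n$ of Lemma~\ref{lem:all-interval-occupancy}, which hold for all large $n$ almost surely. We also use the event $\{D_n^*\le 4\log n\}$, which by Lemma~\ref{lem:isolation-time} (with $t=2\log n$) has complement of probability at most $\tfrac12 n^{-2}$. By Borel--Cantelli it too holds for all large $n$ almost surely.

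On these events we start from the integral identity \eqref{eq:height-integral} of Lemma~\ref{lem:marked-graph-comparison}. The upper bound \eqref{eq:harmonic-upper} of Corollary~\ref{cor:harmonic-logmass} gives, for every $r\in[n]$,
\[
  \widetilde L_n(r)\le\int_0^{D_n(r)}\log_+\bigl(n|\I_t(U_r)|\bigr)\,\dif t+C_1 D_n^*\log\log n .
\]
The error term is at most $4C_1\log n\log\log n=o((\log n)^2)$. Since $|\I_t(U_r)|\le|\I_t^{(1)}|=e^{-M(t)}$, the integrand is at most $(\log n-M(t))_+$. Extending the integral to $[0,\infty)$ gives a bound uniform in $r$:
\[
  \widetilde L_n^*\le\int_0^\infty(\log n-M(t))_+\,\dif t+o_{\mathrm{a.s.}}((\log n)^2).
\]

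It remains to evaluate the integral, which is a deterministic consequence of the almost-sure limit $M(t)/t\to v_*$ from Lemma~\ref{lem:largest-fragment-speed}. Fix $\varepsilon\in(0,v_*)$ and a realization where this limit holds. There is a random $t_0$ with $M(t)\ge(v_*-\varepsilon)t$ for $t\ge t_0$. Since $M\ge0$, split the integral at $t_0$:
\[
  \int_0^\infty(\log n-M(t))_+\,\dif t\le t_0\log n+\int_0^\infty\bigl(\log n-(v_*-\varepsilon)t\bigr)_+\dif t
  = t_0\log n+\frac{(\log n)^2}{2(v_*-\varepsilon)}.
\]
Dividing by $(\log n)^2$ and letting $n\to\infty$, then $\varepsilon\downarrow0$ along a countable sequence, yields \eqref{eq:Q-upper}.

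The only delicate point is that the bound must hold simultaneously over all lineages and all times for each $n$. This is exactly what the single event $G_n$ and the pathwise domination by the largest fragment provide. Note also that $M(t)$ does not depend on $n$, so the almost-sure speed applies along the whole sequence. I expect no real obstacle beyond checking the $\log\log n$ error accumulation, which is controlled by $D_n^*=O(\log n)$.
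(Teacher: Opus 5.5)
Your proposal is correct and follows the paper's proof essentially step for step. You restrict to $G_n\cap\{D_n^*\le 4\log n\}$, which holds eventually almost surely by Lemma~\ref{lem:all-interval-occupancy} and Borel--Cantelli. You then combine \eqref{eq:height-integral}, \eqref{eq:harmonic-upper} and the domination $|\I_t(U_r)|\le e^{-M(t)}$ to get $\widetilde L_n^*\le\int_0^\infty(\log n-M(t))_+\,\dif t+4C_1\log n\log\log n$, and finally split the integral at the random time after which $M(t)\ge(v_*-\varepsilon)t$, exactly as the paper does with $T_\delta$.
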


\begin{proof}
Put $y_n=\log n$. The tail bound \eqref{eq:isolation-tail} gives
$\P(D_n^*>4y_n)\leq(2n^2)^{-1}$. Hence the Borel--Cantelli lemma
and Lemma~\ref{lem:all-interval-occupancy} imply that
$G_n\cap\{D_n^*\leq4y_n\}$ occurs for all sufficiently large $n$
almost surely. Since $|\I_t(U_r)|\leq|\I_t^{(1)}|=e^{-M(t)}$,
the integral representation \eqref{eq:height-integral} and the
bound \eqref{eq:harmonic-upper} therefore give, almost surely for
all sufficiently large $n$,
\[
  \widetilde L_n^*
  \leq\int_0^\infty(y_n-M(t))_+\,\dif t
       +4C_1y_n\log y_n.
\]

Fix $\delta\in(0,v_*)$. By Lemma~\ref{lem:largest-fragment-speed},
there is an almost surely finite (random) time $T_\delta$ such that
$M(t)\geq(v_*-\delta)t$ for all $t\geq T_\delta$. Since $M(t)\geq0$, we have 
\[
  \int_0^\infty(y_n-M(t))_+\,\dif t
  \leq T_\delta y_n
      +\int_0^\infty\bigl(y_n-(v_*-\delta)t\bigr)_+\,\dif t
  =T_\delta \, y_n+\frac{y_n^2}{2(v_*-\delta)}.
\]
Dividing by $y_n^2$ and letting $n\to\infty$ gives, almost surely,
\[
  \limsup_{n\to\infty}\frac{\widetilde L_n^*}{(\log n)^2}
  \leq\frac1{2(v_*-\delta)}.
\]
Letting $\delta\downarrow0$ proves the claim.
\end{proof}

\subsection{The lower bound}
\label{sec:lower-bound} 
We select nested large fragments on a deterministic time mesh.   Their sizes then control   the entire   integral of splitting rates along the lineage of any sample point in the final fragment.

\begin{proposition} 
\label{prop:Q-lower}
Almost surely,
\begin{equation}\label{eq:Q-lower}
  \liminf_{n\to\infty}\frac{\widetilde L_n^*}{(\log n)^2}
  \geq\frac1{2v_*}.
\end{equation}
\end{proposition}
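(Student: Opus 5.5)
We follow the roadmap and build a nested sequence of fragments on a fixed time mesh. Fix an integer $T\geq1$. Set $J_0=(0,1)$. For $j\geq1$, let $J_j$ be the largest interval component of $\I_{jT}$ contained in $J_{j-1}$, taking the leftmost in case of ties. By the branching property, the evolution of $\I$ inside $J_{j-1}$ after time $(j-1)T$, rescaled to $(0,1)$, is independent of $\mathcal F_{(j-1)T}$ and has the law of $\I$. Hence the increments $\xi_j:=\log(|J_{j-1}|/|J_j|)$ are i.i.d. with the law of $M(T)$. Lemma~\ref{lem:largest-fragment-speed} gives $\mu_T:=\E[M(T)]<\infty$, and $\mu_T>0$ since $M(T)>0$ a.s. The SLLN then gives $S_j:=\log(1/|J_j|)=\sum_{i\le j}\xi_i$ with $S_j/j\to\mu_T$ a.s. This sequence depends only on $\I$, so it is defined once for all $n$.

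Let $y_n=\log n$ and $\mathcal K_n:=\max\{j: S_j\leq y_n-2\log y_n\}$, so that $n|J_j|\geq y_n^2$ for $j\leq\mathcal K_n$. From $S_j/j\to\mu_T$ we get $\mathcal K_n/y_n\to1/\mu_T$ a.s. For large $n$ we have $n|J_{\mathcal K_n}|\ge y_n^2\ge C_0y_n$, so on $G_n$ the bound \eqref{eq:occupancy-lower} gives $N_n(J_{\mathcal K_n})\geq1$; pick a label $r$ with $U_r\in J_{\mathcal K_n}$. For $t\in[(j-1)T,jT)$ with $j\le\mathcal K_n$, the fragment $\I_t(U_r)$ contains $J_j\ni U_r$. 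Hence $n|\I_t(U_r)|\geq n|J_j|\geq y_n^2$, and $N_n(t,U_r)\ge 2$ for large $n$, so $t<D_n(r)$. Using \eqref{eq:height-integral} and \eqref{eq:harmonic-lower} (valid for large $n$ on $G_n$),
\[
\widetilde L_n^*\geq\widetilde L_n(r)\geq T\sum_{j=1}^{\mathcal K_n}\bigl(y_n-S_j-C_2\bigr).
\]
Since $\ind{G_n}\to1$ a.s., this holds eventually a.s.

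Next we evaluate the sum. Write $S_j=\mu_Tj+o(j)$ a.s. Since $\mathcal K_n=O(y_n)$, the error terms contribute $o(y_n^2)+O(y_n)$. The main term is $T\sum_{j\le \mathcal K_n}(y_n-\mu_T j)=T\bigl(\mathcal K_n y_n-\mu_T\mathcal K_n^2/2\bigr)+O(y_n)$. With $\mathcal K_n=(1/\mu_T+o(1))y_n$ this equals $\frac{T}{2\mu_T}y_n^2(1+o(1))$. Therefore $\liminf_n\widetilde L_n^*/y_n^2\geq T/(2\mu_T)$ a.s. for each integer $T$. Intersecting over $T\in\N$ gives $\liminf\geq\sup_T T/(2\mu_T)$ a.s.

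Finally, the $L^1$ part of Lemma~\ref{lem:largest-fragment-speed} gives $\mu_T/T=\E[M(T)/T]\to v_*$, so $T/(2\mu_T)\to1/(2v_*)$, which proves \eqref{eq:Q-lower}. The main point to verify carefully is the i.i.d. structure of the $\xi_j$. The choice of $J_j$ is a measurable function of the restricted, rescaled process on $[(j-1)T,jT]$, with ties broken deterministically. The fragmentation property at the deterministic times $(j-1)T$ then gives independence and identical law, and homogeneity ($\alpha=0$) means no time change is needed.
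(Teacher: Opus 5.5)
Your proposal is correct and follows the paper's proof essentially step for step. It uses greedy nested largest descendants $J_j$ on the mesh $T\N$, i.i.d.\ increments with the law of $M(T)$ and the SLLN, and the terminal index $\mathcal K_n$ defined by $n|J_j|\geq(\log n)^2$; it then applies \eqref{eq:harmonic-lower} on $G_n$ along the lineage of a label in $J_{\mathcal K_n}$ and lets $T\to\infty$ via $\mu_T/T\to v_*$. Your explicit remark that $\mu_T>0$, which is needed for $\mathcal K_n<\infty$ and $\mathcal K_n/\log n\to1/\mu_T$, fills a small point the paper leaves implicit.
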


\begin{proof}
For a fixed block duration $T>0$, we will obtain the lower bound
$T/(2\mu_T)$ almost surely, where $\mu_T=\E M(T)$. The mean-speed limit in
Lemma~\ref{lem:largest-fragment-speed} will then allow $T$ to increase.

\smallskip\noindent
\underline{\textit{Step 1.}} We select large interval fragments greedily. 
Set $J_0=(0,1)$. Having chosen $J_{j-1}$ at time $(j-1)T$, let $J_j$
be its largest descendant at time $jT$, breaking ties by choosing the
leftmost one. The positive descendant lengths have finite sum, so the
maximum is attained.  
This construction follows the bounded-lookahead idea of
McDiarmid~\cite[Algorithm~A3]{McDiarmid1990}
and its renormalized greedy formulation in
Addario-Berry and Maillard~\cite[Section~2]{AddarioBerryMaillard}. 
Set
\begin{equation}\label{eq:block-increments}
  S_j:=-\log|J_j|,\qquad j\geq0,
\end{equation}
so that $S_0=0$. 
Conditionally on the fragmentation $\mathcal{I}_{(j-1)T}$ at $(j-1)T$, the future
inside $J_{j-1}$, rescaled to $(0,1)$, is a fresh fragmentation.
Homogeneity leaves its time scale unchanged. Hence the conditional law
of $S_j-S_{j-1}=\log\frac{|J_{j-1}|}{|J_j|}$ is that of $M(T)$ and does not depend on the past, proving
that the increments $S_j-S_{j-1}$ are i.i.d. 
Since $\mu_T=\E[ M(T)]<\infty$ by
Lemma~\ref{lem:largest-fragment-speed}, the SLLN gives
\begin{equation}\label{eq:Sj-SLLN} S_j/j\xrightarrow[j \to \infty]{\mathrm{a.s.}} \mu_T .
\end{equation}

\smallskip\noindent
\underline{\textit{Step 2.}}  We now choose one sampled leaf in the selected lineage.
Write $y_n=\log n$ and, for all  large $n$, set the terminal step 
\begin{equation}\label{eq:ay-Ny-definition}
  \mathcal{K}_n:=\max\{j\geq0:|J_j|\geq y_n^2/n\}.
\end{equation}
Since $y_n-2\log y_n\sim y_n$, the SLLN
\eqref{eq:Sj-SLLN} implies that $\mathcal{K}_n$ is finite and
$\mathcal{K}_n\sim y_n/\mu_T$ almost surely as $n\to\infty$. 
By definition, $n|J_{\mathcal{K}_n}|\geq y_n^2$, which exceeds
the threshold $C_0y_n$ in \eqref{eq:occupancy-lower} for large $n$.
Hence, on $G_n$, the terminal fragment interval contains at least
$c_0y_n^2$ labels in $[n]$:
\begin{equation}\label{eq:terminal-fragment-occupancy}
  N_n(J_{\mathcal{K}_n})
  \geq c_0n|J_{\mathcal{K}_n}| 
  \geq c_0y_n^2.
\end{equation}
The bound applies although $J_{\mathcal{K}_n}$ is random, since it holds for
every interval on the same event. Choose $r_n$ to be the smallest
label in $[n]$ satisfying $U_{r_n}\in J_{\mathcal{K}_n}$. In particular,
\begin{equation}
   \I_{jT}(U_{r_n})= J_{j}  \ , \ \text{ for every }  0 \le j \le \mathcal{K}_n.  \label{eq:J-j-I-jT}
\end{equation}
 
We have $D_n(r_n)>\mathcal{K}_nT$, because for every $0\leq t\leq\mathcal{K}_nT$, the fragment $\I_t(U_{r_n})$
contains $J_{\mathcal{K}_n}$, so
$N_n(t,U_{r_n})\geq N_n(J_{\mathcal{K}_n})\geq c_0y_n^2\geq2$.  

Moreover, for each  $1\leq j\leq \mathcal{K}_n$ and every 
$(j-1)T<t\leq jT$, the relation  \eqref{eq:J-j-I-jT} and the monotonicity of $t \mapsto \I_t$ imply
\begin{equation}\label{eq:block-lineage-envelope}
 J_j \subset   \I_t(U_{r_n}) \ , \  \text{ and } \
  \log(1/|\I_t(U_{r_n})|)\leq S_j\leq y_n-2\log y_n.
\end{equation}
\Needspace{6\baselineskip}
\smallskip\noindent
\underline{\textit{Step 3. }}
By \eqref{eq:block-lineage-envelope}, we have 
$ n|\I_t(U_{r_n})| \geq  n|J_{\mathcal{K}_n}| \ge y_n^2$ throughout $t \in [0,\mathcal{K}_nT]$.
We may therefore apply \eqref{eq:harmonic-lower} on this whole time window. Plugging it into 
\eqref{eq:height-integral} and applying \eqref{eq:block-lineage-envelope} again gives  
\begin{align}
  \widetilde L_n^*
  \geq \widetilde L_n(r_n) &\geq\sum_{j=1}^{\mathcal{K}_n}\int_{(j-1)T}^{jT}
    \bigl(\log(n|\I_t(U_{r_n})|)-C_2\bigr)\,\dif t\\
  &\geq T\sum_{j=1}^{\mathcal{K}_n}(y_n-S_j-C_2) \quad \text{ on } G_n .
  \label{eq:Q-nested-lineage-lower}
\end{align}
By \eqref{eq:Sj-SLLN}, we have $
\sum_{j=1}^{\mathcal{K}_n}S_j
   \to [{\mu_T}/{2} +o_{\mathrm{a.s.}}(1)]  {\mathcal{K}_n^{2}}$. 
Using $\mathcal{K}_n/y_n\to1/\mu_T$ a.s.,  
we obtain,  
\[  \frac1{y_n^2}\sum_{j=1}^{\mathcal{K}_n}(y_n-S_j-C_2)
   = \bigl(  
  1-\frac{C_2}{y_n}\bigr)\frac{\mathcal{K}_n}{y_n}
  - \bigl(  \frac{\mu_T}{2}+o_{\mathrm{a.s.}}(1)\bigr)
    \bigl(  \frac{\mathcal{K}_n}{y_n}\bigr)^2
   \xrightarrow[n \to \infty]{\mathrm{a.s.}}  
   \frac1{2\mu_T}. \]
Combining this limit with \eqref{eq:Q-nested-lineage-lower} and
the fact that $G_n$ occurs for all sufficiently large $n$ almost surely
by Lemma~\ref{lem:all-interval-occupancy}, we obtain, for each fixed $T>0$,
\begin{equation}\label{eq:fixed-T-lower}
  \liminf_{n\to\infty}\frac{\widetilde L_n^*}{(\log n)^2}
  \geq\frac{T}{2\mu_T}
  \quad\text{almost surely}.
\end{equation}
Taking a countable intersection, \eqref{eq:fixed-T-lower} holds on
one probability-one event simultaneously for all positive integers $T$.
Letting $T\to\infty$ along the integers and using
$\mu_T/T\to v_*$ from Lemma~\ref{lem:largest-fragment-speed} proves the desired inequality \eqref{eq:Q-lower}.
\end{proof}

\subsection{Completion of the proof}

\begin{proof}[Proof of Theorem~\ref{thm:main}]
Propositions~\ref{prop:Q-upper} and~\ref{prop:Q-lower} give $ {\widetilde L_n^*}/{(\log n)^2}\to 1/{2v_*}$ a.s.. Hence 
by Lemma~\ref{lem:marked-graph-comparison}, 
\[
  \frac{L_n^*}{(\log n)^2}
  =\frac{\widetilde L_n^*}{(\log n)^2}
   +\frac{L_n^*-\widetilde L_n^*}{(\log n)^2}
  \xrightarrow[n\to\infty]{\mathrm{a.s.}}\frac1{2v_*}=C_{\mathrm{ht}}. 
\]
 
For convergence in $L^p$, note that
$\E[\widetilde L_n(r)\mid\DTCS(n)]=L_n(r)$, so
\[
  L_n^*\leq\E[\widetilde L_n^*\mid\DTCS(n)].
\]
For every fixed $q\geq1$, the tail bound \eqref{eq:isolation-tail}
gives $\E[(D_n^*)^q]=O_q((\log n)^q)$. Since
$\widetilde L_n^*\leq h_{n-1}D_n^*$ by \eqref{eq:height-integral}, using  
conditional Jensen's inequality we get 
\[
  \E[(L_n^*)^q]
  \leq\E[(\widetilde L_n^*)^q]
  \leq h_{n-1}^q\E[(D_n^*)^q]
  =O_q((\log n)^{2q}).
\]
Taking $q>p$,  since $L_n^*/(\log n)^2$ have uniformly bounded
$q$th moments, the almost-sure convergence thus implies
convergence in $L^p$. 
The  characterization of $\theta_*$ were proved in
Lemma~\ref{lem:largest-fragment-speed}.
\end{proof}

\section*{Acknowledgement \& Statement on AI use}
H.M. thanks Oren Louidor for his  interest in this work and for several enlightening discussions.
H.M. is supported in part by a Lady Davis Fellowship at the Technion.  
  
\noindent\textbf{Statement on AI use.}  The idea of introducing the integrated splitting rate emerged from
a conversation with OpenAI's GPT-5.6 Sol about methods for studying
the largest fragments and shattering times of self-similar fragmentation processes.
The model identified the work of Dyszewski, Johnston, Palau, and
Prochno~\cite{DyszewskiJohnstonPalauProchno} and explained their use
of a mass-dependent observation times to construct a CMJ
branching process.

OpenAI's GPT-5.6 Sol and, in the later stages of the work,
GPT-6 Astra were used 
to further explore, test and refine,   the proof strategies, work out missing
computations and technical arguments, and identify potential gaps.
Anthropic's Claude Opus 5 was used for language editing and to
generate TikZ code for the figures. 
All model-assisted material included in the manuscript was critically
reviewed by the author, who independently verified all mathematical
claims and citations and takes full responsibility for the manuscript.

\bibliographystyle{amsplain}
\bibliography{references}

@article{AddarioBerryMaillard,
  author  = {Addario-Berry, Louigi and Maillard, Pascal},
  title   = {The algorithmic hardness threshold for continuous random energy models},
  journal = {Math. Stat. Learn.},
  volume  = {2},
  number  = {1},
  pages   = {77--101},
  year    = {2019},
  doi     = {10.4171/MSL/12},
  url     = {https://doi.org/10.4171/MSL/12}
}

@incollection{AldousCladograms,
  author    = {Aldous, David},
  title     = {Probability distributions on cladograms},
  booktitle = {Random Discrete Structures},
  editor    = {Aldous, David and Pemantle, Robin},
  series    = {The IMA Volumes in Mathematics and its Applications},
  volume    = {76},
  pages     = {1--18},
  publisher = {Springer},
  address   = {New York},
  year      = {1996},
  doi       = {10.1007/978-1-4612-0719-1_1},
  url       = {https://doi.org/10.1007/978-1-4612-0719-1_1}
}

@misc{AldousJansonII,
  author        = {Aldous, David J. and Janson, Svante},
  title         = {The critical beta-splitting random tree {II}: Overview and open problems},
  howpublished  = {arXiv:2303.02529v3},
  year          = {2025},
  eprint        = {2303.02529},
  archiveprefix = {arXiv},
  primaryclass  = {math.PR},
  doi           = {10.48550/arXiv.2303.02529},
  url           = {https://arxiv.org/abs/2303.02529}
}

@misc{AldousJansonIII,
  author        = {Aldous, David J. and Janson, Svante},
  title         = {The critical beta-splitting random tree {III}: The exchangeable partition representation and the fringe tree},
  howpublished  = {arXiv:2412.09655},
  year          = {2024},
  eprint        = {2412.09655},
  archiveprefix = {arXiv},
  primaryclass  = {math.PR},
  doi           = {10.48550/arXiv.2412.09655},
  url           = {https://arxiv.org/abs/2412.09655}
}

@article{AldousJansonIV,
  author  = {Aldous, David and Janson, Svante},
  title   = {The critical beta-splitting random tree {IV}: {Mellin} analysis of leaf height},
  journal = {Electron. J. Probab.},
  volume  = {30},
  pages   = {Paper No. 69, 1--39},
  year    = {2025},
  doi     = {10.1214/25-EJP1332},
  url     = {https://doi.org/10.1214/25-EJP1332}
}

@article{AldousPittel,
  author  = {Aldous, David J. and Pittel, Boris},
  title   = {The critical beta-splitting random tree {I}: Heights and related results},
  journal = {Ann. Appl. Probab.},
  volume  = {35},
  number  = {1},
  pages   = {158--195},
  year    = {2025},
  doi     = {10.1214/24-AAP2112},
  url     = {https://doi.org/10.1214/24-AAP2112}
}

@article{Basdevant,
  author  = {Basdevant, Anne-Laure},
  title   = {Fragmentation of ordered partitions and intervals},
  journal = {Electron. J. Probab.},
  volume  = {11},
  number  = {16},
  pages   = {394--417},
  year    = {2006},
  doi     = {10.1214/EJP.v11-323},
  url     = {https://doi.org/10.1214/EJP.v11-323}
}

@article{BertoinSelfSimilar,
  author  = {Bertoin, Jean},
  title   = {Self-similar fragmentations},
  journal = {Ann. Inst. H. Poincar\'e Probab. Statist.},
  volume  = {38},
  number  = {3},
  pages   = {319--340},
  year    = {2002},
  doi     = {10.1016/S0246-0203(00)01073-6},
  url     = {https://numdam.org/item/AIHPB_2002__38_3_319_0/}
}

@article{BertoinAsymptotic,
  author  = {Bertoin, Jean},
  title   = {The asymptotic behavior of fragmentation processes},
  journal = {J. Eur. Math. Soc. (JEMS)},
  volume  = {5},
  number  = {4},
  pages   = {395--416},
  year    = {2003},
  doi     = {10.1007/s10097-003-0055-3},
  url     = {https://doi.org/10.1007/s10097-003-0055-3}
}

@misc{ChorbadzhiyskaMinchevSavov,
  author        = {Chorbadzhiyska, Yoana R. and Minchev, Martin and Savov, Mladen},
  title         = {Asymptotics for {Beta}-Splitting Trees via Homogeneous Fragmentations and Meromorphic Potential Theory},
  howpublished  = {arXiv:2608.18320},
  year          = {2026},
  eprint        = {2608.18320},
  archiveprefix = {arXiv},
  primaryclass  = {math.PR},
  doi           = {10.48550/arXiv.2608.18320},
  url           = {https://arxiv.org/abs/2608.18320}
}

@misc{DyszewskiJohnstonPalauProchno,
  author        = {Dyszewski, Piotr and Johnston, Samuel G. G. and Palau, Sandra and Prochno, Joscha},
  title         = {The largest fragment in self-similar fragmentation processes of positive index},
  howpublished  = {arXiv:2409.11795v3},
  year          = {2026},
  eprint        = {2409.11795},
  archiveprefix = {arXiv},
  primaryclass  = {math.PR},
  doi           = {10.48550/arXiv.2409.11795},
  url           = {https://arxiv.org/abs/2409.11795}
}

@article{FangZeitouni,
  author  = {Fang, Ming and Zeitouni, Ofer},
  title   = {Consistent minimal displacement of branching random walks},
  journal = {Electron. Commun. Probab.},
  volume  = {15},
  pages   = {106--118},
  year    = {2010},
  doi     = {10.1214/ECP.v15-1533},
  url     = {https://doi.org/10.1214/ECP.v15-1533}
}

@article{FaraudHuShi,
  author  = {Faraud, Gabriel and Hu, Yueyun and Shi, Zhan},
  title   = {Almost sure convergence for stochastically biased random walks on trees},
  journal = {Probab. Theory Related Fields},
  volume  = {154},
  number  = {3--4},
  pages   = {621--660},
  year    = {2012},
  doi     = {10.1007/s00440-011-0379-y},
  url     = {https://doi.org/10.1007/s00440-011-0379-y}
}

@article{HaasMiermontPitmanWinkel,
  author  = {Haas, B{\'e}n{\'e}dicte and Miermont, Gr{\'e}gory and Pitman, Jim and Winkel, Matthias},
  title   = {Continuum tree asymptotics of discrete fragmentations and applications to phylogenetic models},
  journal = {Ann. Probab.},
  volume  = {36},
  number  = {5},
  pages   = {1790--1837},
  year    = {2008},
  doi     = {10.1214/07-AOP377},
  url     = {https://doi.org/10.1214/07-AOP377}
}

@article{IksanovHarmonicDescent,
  author  = {Iksanov, Alexander},
  title   = {The harmonic descent chain and regenerative composition structures},
  journal = {Electron. Commun. Probab.},
  volume  = {30},
  pages   = {Paper No. 11, 1--3},
  year    = {2025},
  doi     = {10.1214/25-ECP661},
  url     = {https://doi.org/10.1214/25-ECP661}
}

@article{IksanovNikitinYakymiv,
  author  = {Iksanov, Alexander and Nikitin, Anatolii and Yakymiv, Roman},
  title   = {An infinite balls-in-boxes approach to the critical beta-splitting tree: Some central limit theorems},
  journal = {Electron. Commun. Probab.},
  volume  = {31},
  pages   = {Paper No. 15, 1--10},
  year    = {2026},
  doi     = {10.1214/26-ECP766},
  url     = {https://doi.org/10.1214/26-ECP766}
}

@article{KolesnikContraction,
  author  = {Kolesnik, Brett},
  title   = {Critical beta-splitting, via contraction},
  journal = {Electron. Commun. Probab.},
  volume  = {30},
  pages   = {Paper No. 10, 1--14},
  year    = {2025},
  doi     = {10.1214/25-ECP658},
  url     = {https://doi.org/10.1214/25-ECP658}
}

@article{Joseph,
  author  = {Joseph, Adrien},
  title   = {A phase transition for the heights of a fragmentation tree},
  journal = {Random Structures Algorithms},
  volume  = {39},
  number  = {2},
  pages   = {247--274},
  year    = {2011},
  doi     = {10.1002/rsa.20340},
  url     = {https://doi.org/10.1002/rsa.20340}
}

@article{KyprianouLaneMorters,
  author  = {Kyprianou, Andreas E. and Lane, Francis and M{\"o}rters, Peter},
  title   = {The largest fragment of a homogeneous fragmentation process},
  journal = {J. Stat. Phys.},
  volume  = {166},
  number  = {5},
  pages   = {1226--1246},
  year    = {2017},
  doi     = {10.1007/s10955-017-1714-1},
  url     = {https://doi.org/10.1007/s10955-017-1714-1}
}

@incollection{McDiarmid1990,
  author    = {McDiarmid, C. J. H.},
  title     = {Probabilistic analysis of tree search},
  booktitle = {Disorder in Physical Systems},
  editor    = {Grimmett, G. R. and Welsh, D. J. A.},
  publisher = {Oxford University Press},
  address   = {Oxford},
  pages     = {249--260},
  year      = {1990},
  url       = {https://www.statslab.cam.ac.uk/~grg1000/books/hammfest/15-cjhm.pdf}
}

@article{Pittel1994,
  author  = {Pittel, Boris},
  title   = {Note on the heights of random recursive trees and random {$m$}-ary search trees},
  journal = {Random Structures Algorithms},
  volume  = {5},
  number  = {2},
  pages   = {337--347},
  year    = {1994},
  doi     = {10.1002/rsa.3240050207},
  url     = {https://doi.org/10.1002/rsa.3240050207}
}

@book{ShiBRW,
  author    = {Shi, Zhan},
  title     = {Branching Random Walks},
  series    = {Lecture Notes in Mathematics},
  volume    = {2151},
  publisher = {Springer},
  address   = {Cham},
  year      = {2015},
  note      = {\'Ecole d'\'Et\'e de Probabilit\'es de Saint-Flour XLII--2012},
  doi       = {10.1007/978-3-319-25372-5},
  url       = {https://igor-kortchemski.perso.math.cnrs.fr/MAP575/docs/brw.pdf}
}

@misc{MaCTCS,
  author        = {Ma, Heng},
  title         = {Extremal separation times and clade-count dynamics in fragmentation trees: a freezing transition},
  howpublished  = {arXiv:2609.14325},
  year          = {2026},
  eprint        = {2609.14325},
  archiveprefix = {arXiv},
  primaryclass  = {math.PR},
  doi           = {10.48550/arXiv.2609.14325},
  url           = {https://arxiv.org/abs/2609.14325}
}

\end{document}